\documentclass[12pt,a4paper]{amsart}

\usepackage{amsfonts, amsmath, amssymb, amsthm, amscd, hyperref}

\newtheorem{thm}{Theorem}
\newtheorem*{thm*}{Theorem}
\newtheorem{lem}{Lemma}

\newtheorem{cor}[thm]{Corollary}

\theoremstyle{definition}
\newtheorem{defn}{Definition}

\theoremstyle{remark}

\newcommand{\Sy}{\Sigma^{(2)}}
\newcommand{\Sg}{\Sigma}

\DeclareMathOperator{\cat}{cat}

\DeclareMathOperator{\dist}{dist}

\renewcommand{\int}{\mathop{\rm int}}

\renewcommand{\epsilon}{\varepsilon}

\begin{document}

\title[Multiplicity of continuous maps\dots]{Multiplicity of continuous maps between manifolds}

\author{R.N.~Karasev}
\thanks{This research is supported by the Dynasty Foundation, the President's of Russian Federation grant MK-113.2010.1, the Russian Foundation for Basic Research grants 10-01-00096 and 10-01-00139, the Federal Program ``Scientific and scientific-pedagogical staff of innovative Russia'' 2009--2013}

\email{r\_n\_karasev@mail.ru}
\address{
Roman Karasev, Dept. of Mathematics, Moscow Institute of Physics
and Technology, Institutskiy per. 9, Dolgoprudny, Russia 141700}

\keywords{multiplicity, multiple points, singularities, configuration spaces}

\subjclass[2000]{55M20, 55M30, 55M35, 55R25, 55R80, 57R45}

\begin{abstract}
We consider a continuous map $f :M\to N$ between two manifolds and try to estimate its multiplicity from below, i.e. find a $q$-tuple of pairwise distinct points $x_1,\ldots, x_q\in M$ such that $f(x_1) = f(x_2) = \ldots = f(x_q)$.

We show that there are certain characteristic classes of vector bundle $f^*TN-TM$ that guarantee a bound on the multiplicity of $f$. In particular, we prove some non-trivial bound on the multiplicity for a continuous map of a real projective space of certain dimension into a Euclidean space.
\end{abstract}

\maketitle

\section{Introduction}

In this paper we consider a continuous map $f :M\to N$ between two manifolds and try to find some sufficient conditions for existence of \emph{multiple points}, i.e. the $q$-tuples of pairwise distinct point $x_1,\ldots, x_q\in M$ such that 
$$
f(x_1) = f(x_2) = \ldots = f(x_q).
$$

We call such a $q$-tuple $x_1,\ldots, x_q$ a \emph{coincident $q$-tuple}, and call the \emph{multiplicity} of $f$ the maximum $q$ such that there exists a coincident $q$-tuple for $f$.

The results of this kind for double points of continuous maps were obtained in~\cite{wu1958,cf1960,schw1966,mcc1978}. They have obvious relation to embeddability and immersibility of manifolds. Some results about the multiplicity in the case $f$ is a smooth immersion are also known, see~\cite{herb1981,eccgr2006} for example. In the recent paper~\cite{grom2010} the lower bounds for the multiplicity are given in the case when the domain space is a polyhedron of high enough complexity (a skeleton of a simplex) and $f$ is a generic piecewise linear or piecewise smooth map. In this paper we investigate the case when $f$ is continuous without any other restrictions.

In Section~\ref{coind-char} we show that there exist certain characteristic classes of vector bundle $f^*TN-TM$ that guarantee the existence of multiple points for $f$. Then we give some particular applications of these classes. We prove Theorem~\ref{proj-coinc} on the multiplicity for continuous maps from a projective space to a Euclidean space, calculate some characteristic classes of coincident $4$-tuples in Section~\ref{4-fold-coinc}, calculate the characteristic classes modulo a prime $p$ of coincident $p$-tuples in Section~\ref{p-fold}.

In Section~\ref{genus-cat} we consider another question, having a lot in common multiple points of maps, that is the question of estimating from below the Krasnosel'skii-Schwarz genus and the Lyusternik-Schnirelmann category of configuration spaces of manifolds. 

The author thanks S.A.~Melikhov for pointing out the relation of this problem to the singularity theory and detailed discussions of the subject.

\section{Local multiplicity of generic smooth maps -- the approach of singularity theory}
\label{generic-smooth}

The theory of singularities for smooth maps gives some approach to multiplicity. For example, it is known that a generic (in some sense) smooth map $f:M\to N$ may have singularities of type $\Sigma^{1^k}$, with the following canonical form~\cite{morin1965}.

Let the local coordinates be $(x_1,\ldots, x_m)$ in $M$ and $(y_1,\ldots, y_n)$ in $N$. Then the map is given by
\begin{eqnarray}
y_i &=& x_i,\ i=1,\ldots, m-1\\
\label{pol-mid}
y_i &=& \sum_{l=1}^k x_{(i-m)k + l} x_m^l,\ i=m,\ldots, n-1\\
\label{pol-last}
y_n &=& \sum_{l=1}^{k-1} x_{(n-m)k + l} x_m^l + x_m^{k+1},
\end{eqnarray}
here the inequality $k(n-m+1)\le m$ must hold. If we select the numbers $x_i$ ($i=1,\ldots m-1$) so that the polynomials in $x_m$ in the right part of (\ref{pol-mid}) are zero, and the right part of (\ref{pol-last}) has $k+1$ distinct roots, then we obtain a coincident $(k+1)$-tuple, since the coordinate $x_m$ has $k+1$ possible choices.  

Such singularities for generic maps are guaranteed by the appropriate characteristic classes of the virtual bundle $f^*TN-TM$. The classes $\sigma_{k, m-n}$ for singularities of type $\Sigma^{1^k}$ in codimension $m-n$ can be expressed in terms of Stiefel-Whitney classes by some recurrent formulas, see~\cite{port1971} for example. 

Unlike the singularity theory approach, the approach to multiplicity in this paper is valid for arbitrary continuous maps, not only smooth and generic. This approach has some similarities with the singularity theory, in particular, some characteristic classes of $f^*TN- TM$ that guarantee multiple points are introduced. In particular, in Section~\ref{coind-char} we introduce a characteristic class $s_{q,m-n}$ that guarantee a coincident $q$-tuple for $q=2^l$, the author does not known whether the classes $s_{q,m-n}$ are a particular case of the classes $\sigma_{q,m-n}$. 

\section{Global multiplicity and configuration spaces}

In this section we consider a continuous map $f :M \to N$ and try to give sufficient conditions for the existence of a coincident $q$-tuple. The most straightforward approach is to consider the configuration space.

\begin{defn} For a topological space $X$ denote the \emph{configuration space}
$$
K^q(X) = \{(x_1, \ldots, x_q)\in X^q : \forall i,j\ x_i\not= x_j\}.
$$
\end{defn}

Note that the permutation group $\Sg_q$ acts freely on $K^q(X)$. For any continuous map $f: M\to N$ denote its power $f^q : K^q(M)\to N^q$ its power restricted to $K^q(M)$. This is an $\Sg_q$-equivariant map. A coincident $q$-tuple is an intersection of $f^q(K^q(X))$ with the (thin) diagonal $\Delta(N)\subseteq N^q$. 

Thus the preimage of the diagonal $(f^q)^{-1}(\Delta(N))\subseteq K^q(M)$ can be considered as an obstruction to deforming the map $f$ so that it has no coincident $q$-tuples. In the case when $M$ and $N$ are smooth manifolds of dimensions $m$ and $n$ respectively, and $M$ is compact, this preimage of the diagonal can be considered as an $\Sg_q$-equivariant cohomology class 
$$
s_q(f)\in H_{\Sg_q}^{n(q-1)}(K^q(M)) = H^{n(q-1)}(K^q(M)/{\Sg_q}),
$$ 
the coefficients of the cohomology being $Z_2$, or $\mathbb Z$, possibly with the sign action of the group $\Sg_q$ depending on the orientability of $N$ and parity of its dimension. Certainly, this obstruction can also be considered as an oriented or non-oriented cobordism class in $\Omega_{(S)O}^{n(q-1)}(K^q(M)/{\Sg_q})$, but we do not use the cobordism in this paper.

The global cohomology class for double points has certain relation with the local double points in the case $N=\mathbb R^n$, see~\cite{wu1958,schw1966,mcc1978} for example. For multiplicity $>2$ we restrict ourselves to studying the local (in some sense) multiplicity in the following sections.

\section{Local multiplicity of continuous maps -- the configuration space bundle}

In order to prove that the class $s_q(f)$ is nonzero, it sometimes makes sense to restrict it to the intersection of $K^q(M)$ with a certain neighborhood of the thin diagonal $\Delta(M)\subset M^q$. This approach would give multiple points, that are close enough to each other in $M$. We call such multiple points \emph{local}. 

Note that this type of local multiplicity is stronger than the local multiplicity in the smooth generic case (see Section~\ref{generic-smooth}), because here we may guarantee the existence of a coincident $q$-tuple $\{x_1,\ldots, x_q\}$ with some small but bounded from below diameter, the bound depending on $M$ only. In will be clear from the definition of $Q^q(M,\ldots)$ in Section~\ref{coind-char}.

Similar to what is done in the singularity theory, we are going to reformulate the problem as a problem for bundle maps. Suppose $M$ is a compact Riemannian manifold, and $N$ is a Riemannian manifold with the injectivity radius $r(M)$.

Consider the tangent bundle $TM$ and the exponential map $\exp :TM\to M\times M$, induced by the Riemannian metric, and sending a tangent vector $\tau$ at $x$ to the ends of a geodesic $(x, y)$ with length $|\tau|$ and starting direction $\tau$. Let us fix some $x\in M$, then the $q$-th power $\exp^q : K^q(T_xM) \to M^q$ maps $q$-tuples $(\tau_1,\ldots, \tau_q)\in (T_xM)^q$ of vectors with lengths $<r(M)$ to $q$-tuples of distinct points in $M$. Hence by replacing $T_xM$ with an open disc $D_xM$ of radius $r(M)$ we obtain a well-defined map $\exp^q : K^q(D_xM)\to K^q(M)$. Hence there exists $\Sg_q$-equivariant map
$$
\exp^q : K_M^q(DM) \to K^q(M),
$$
here we assume the following definition.

\begin{defn}
Let $\xi : E(\xi)\to M$ be a vector or disc bundle over $M$. The subspace of $K^q(E(\xi))$, consisting of configurations lying in the same fiber of $\xi$, is denoted $K_M^q(\xi)$ and called the \emph{configuration space bundle}.
\end{defn}

Now let us consider a continuous map $f :M\to N$. Let us take small enough tangent disc bundle $DN$ so that $\exp$ (of $N$) is invertible on it. Then take small enough tangent disc bundle $DM$ so that the inclusion for the map of pairs $f^2\circ\exp_M DM\subset \exp_N DN$ holds. Then a fiberwise map 
$$
\phi = \exp_N^{-1}\circ f^2\circ \exp_M : DM\to DN
$$
is defined. To find a local coincident $q$-tuple it is sufficient to find a coincident $q$-tuple of the fiberwise map $\phi^q : K_M^q(DM)\to DN^{\oplus q}$, the latter is the $q$-fold Whitney sum of bundles over $N$. Consider the pullback $f^*(DN)$ with the natural fiberwise map $f_* : f^*(DN)\to DN$. Note, that there exists a natural fiberwise map $\psi : DM\to f^*(DN)$, and the corresponding map $\psi^q : K_M^q(DM) \to (f^*(DN))^{\oplus q}$ such that $\phi^q = f_* \circ \psi^q$. So the local multiplicity is bounded from below if we bound from below a multiplicity in the fiberwise map 
$$
\psi : DM \to f^*(DN)
$$
over the same space $M$.

In the sequel we do not distinguish between a vector bundle and its disc bundle, since they and their configuration spaces are diffeomorphic. Let us generalize a problem to finding multiple points for a fiberwise map $\psi : \xi\to \eta$ of some vector bundles $\xi$ and $\eta$ over the same space $M$.

Let $A_q$ be the $q-1$-dimensional representation of $\Sg_q$, arising from the natural permutation representation on $\mathbb R^q$ by taking the quotient $\mathbb R^q/(1,1,\ldots,1)$. Denote the natural projection of the configuration space bundle $\xi^q : K_M^q(\xi)\to M$. Now the map $\psi^q$ gives an $\Sg_q$-equivariant section of the vector bundle $(\xi^q)^*(\eta^{\oplus q})$. Composed with the natural projection 
$$
(\xi^q)^*(\eta^{\oplus q})\to A_q\otimes(\xi^q)^*(\eta),
$$ 
it also gives a section $\psi_0^q$ of the vector bundle $A_q\otimes(\xi^q)^*(\eta)$ over $K_M^q(\xi)$. Note that $\psi_0^q$ is an equivariant section w.r.t the natural action of $\Sg_q$ on $K_M^q(\xi)$ and on $A_q$. Now the coincident $q$-tuples correspond to the zero set of the section $\psi_0^q$, and we have reduced the problem of finding local multiple points to proving that the zero set of the $\Sg_q$-equivariant vector bundle $A_q\otimes(\xi^q)^*(\eta)$ over $K_M^q(\xi)$ is nonempty.

\section{The Euler class of local multiple points}

Now let us consider the zero set $Z(\psi^q, \xi, \eta)$ of some $\Sg_q$-equivariant section of the bundle $A_q\otimes(\xi^q)^*(\eta)$ over $K_M^q(\xi)$. We shall denote the bundle simply $A_q\otimes \eta$ since it does not lead to a confusion.

Certainly, the set $Z(\psi^q, \xi, \eta)$ may be considered a manifold for generic sections, and it is Poincare dual to the Euler class $e(A_q\otimes \eta)$, taken in the equivariatn cohomology or some bordism theory. It can be calculated directly sometimes, but we are going to ``stabilize'' it in some sense to simplify the computation, though some information can be lost. 

Consider some other vector bundle $\zeta$ and two fiberwise maps $\psi : \xi\to \eta$ and $\iota : \zeta\to\zeta$, the latter being the identity. Now it is readily seen that 

\begin{equation}
\label{coinc-stability}
Z((\psi\oplus\iota)^q, \xi\oplus\zeta, \eta\oplus\zeta) = Z(\psi^q, \xi, \eta)\times_M \zeta.
\end{equation}

Hence if the Euler class $e(A_q\otimes(\eta\oplus\zeta))$ is nonzero over $K_M^q(\xi\oplus\zeta)$, then the set $Z(\psi^q, \xi, \eta)$ cannot be empty.

\begin{lem}
\label{triv-reduction}
The coincident $q$-tuples in a continuous fiberwise map $\psi : \xi\to\eta$ over $M$ are guaranteed by a nonzero $\Sg_q$-equivariant Euler class $e(A_q\otimes(\xi^\perp\oplus\eta))$ in the cohomology (or cobordism) of $K^q(\mathbb R^\mu)\times M$, where $\mu=\dim \xi + \dim \xi^\perp$.
\end{lem}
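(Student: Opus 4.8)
The plan is to embed the base $M$ into a high-dimensional Euclidean space and use the stabilization identity (\ref{coinc-stability}) to replace the bundle $\xi$ by a trivial one, at the cost of replacing $\eta$ by $\xi^\perp\oplus\eta$. First I would choose an embedding $M\hookrightarrow \mathbb R^\mu$ with normal bundle $\xi^\perp$, so that $\xi\oplus\xi^\perp$ is the trivial bundle $\underline{\mathbb R^\mu}$ over $M$ (here $\mu=\dim\xi+\dim\xi^\perp$; by Whitney's embedding theorem such $\mu$ exists, and $\xi^\perp$ depends on the embedding but its stable class does not matter for the argument). Apply the construction preceding (\ref{coinc-stability}) with $\zeta=\xi^\perp$ and $\iota=\mathrm{id}_{\xi^\perp}$: from $\psi:\xi\to\eta$ we obtain $\psi\oplus\iota:\xi\oplus\xi^\perp\to\eta\oplus\xi^\perp$, i.e. a fiberwise map $\underline{\mathbb R^\mu}\to\eta\oplus\xi^\perp$. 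By (\ref{coinc-stability}) its zero set of coincident $q$-tuples is $Z(\psi^q,\xi,\eta)\times_M\xi^\perp$, which is nonempty if and only if $Z(\psi^q,\xi,\eta)$ is nonempty. Hence it suffices to guarantee a coincident $q$-tuple for a fiberwise map out of the trivial bundle.

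Next I would identify the relevant configuration space bundle. Since $\xi\oplus\xi^\perp=\underline{\mathbb R^\mu}$ is the trivial bundle with total space $M\times\mathbb R^\mu$, the configuration space bundle $K_M^q(\xi\oplus\xi^\perp)$ consists of $q$-tuples of distinct points lying in a common fiber, so it is canonically $M\times K^q(\mathbb R^\mu)$, with $\Sg_q$ acting trivially on the $M$ factor and by permutation on $K^q(\mathbb R^\mu)$. The discussion after (\ref{coinc-stability}) says that if the $\Sg_q$-equivariant Euler class $e\bigl(A_q\otimes(\eta\oplus\xi^\perp)\bigr)$ is nonzero over this configuration space bundle, then $Z((\psi\oplus\iota)^q,\underline{\mathbb R^\mu},\eta\oplus\xi^\perp)$ cannot be empty, and by the previous paragraph neither can $Z(\psi^q,\xi,\eta)$. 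Combining: a nonzero class $e(A_q\otimes(\xi^\perp\oplus\eta))$ in $H^*_{\Sg_q}\bigl(K^q(\mathbb R^\mu)\times M\bigr)$ (equivalently in the cohomology of the quotient, or in the corresponding cobordism group) guarantees a coincident $q$-tuple for $\psi$, which is exactly the assertion of the lemma.

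The one point requiring a little care — and the main (mild) obstacle — is the passage from the nonvanishing of the stabilized Euler class to the nonemptiness of the unstabilized zero set $Z(\psi^q,\xi,\eta)$ itself, rather than merely of the stabilized one. This is handled precisely by identity (\ref{coinc-stability}): the stabilized zero set is the total space of the bundle $Z(\psi^q,\xi,\eta)\times_M\xi^\perp$, so its nonemptiness forces $Z(\psi^q,\xi,\eta)\neq\varnothing$ (a bundle over the empty set is empty). One should also note that the Euler class here must be understood correctly when $N$ is non-orientable or odd-dimensional — the bundle $A_q\otimes\eta$ carries the twisted $\Sg_q$-action described in Section 3, and the coefficients of the equivariant cohomology (or the cobordism theory) must be taken with the matching local system; with that proviso the Poincaré-duality interpretation of the Euler class as the class of the zero locus of a generic section goes through verbatim, and the argument is complete.
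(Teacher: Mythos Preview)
Your argument is essentially the paper's own proof: take $\zeta=\xi^\perp$ in the stabilization identity~(\ref{coinc-stability}) so that $\xi\oplus\xi^\perp=\epsilon^\mu$, and then observe $K_M^q(\epsilon^\mu)=K^q(\mathbb R^\mu)\times M$. The logical flow and the use of the Euler class are the same.

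One small slip in justification: you invoke Whitney's theorem to embed $M\hookrightarrow\mathbb R^\mu$ and call the normal bundle $\xi^\perp$, concluding $\xi\oplus\xi^\perp$ is trivial. But the normal bundle of such an embedding is complementary to $TM$, not to the arbitrary bundle $\xi$ in the lemma's statement. What you actually need is only that every vector bundle over a compact (or finite-dimensional paracompact) base admits a complement $\xi^\perp$ with $\xi\oplus\xi^\perp$ trivial; this is the standard fact the paper is using, and it requires no embedding of $M$. In the paper's intended application one does have $\xi=TM$, so the two coincide there, but the lemma as stated is more general. With that correction your proof is complete and matches the paper's.
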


\begin{proof}
In the above reasoning take $\zeta$ to be $\xi^\perp$ such that $\xi\oplus\xi^\perp = \epsilon^\mu$, $\epsilon$ denotes a trivial vector bundle. Then it suffices to note that 
$$
K_M^q(\epsilon^\mu) = K^q(\mathbb R^\mu)\times M.
$$ 
\end{proof}

If we consider the cohomology with coefficients in a field $Z_p$, then by the K\"unneth formula the algebra $H_{\Sg_q}^*(K^q(\mathbb R^\mu)\times M)$ is a free $H^*(M)$-module, spanned by the linear basis of $H^*(K^q(\mathbb R^\mu)/{\Sg_q})$. Denote the latter basis by $(h_1,\ldots, h_{N(q,\mu)})$, it is known (see~\cite{fuks1970,vass1994,rot2008}) that in the case $p=2$ the elements $h_i$ can be selected to be part of the basis of $H^*(B\Sg_q, Z_2)$.

Now we can decompose the Euler class 
$$
e(A_q\otimes (\xi^\perp\oplus\eta)) = \sum_{i=1}^{N(q,\mu)} s_ih_i
$$
and obtain the elements $s_i\in H^*(M, Z_p)$, that depend naturally on the bundle $\xi^\perp\oplus\eta$. Hence $s_i$ only depend on the Stiefel-Whitney (in case $p=2$) or the Pontryagin and Euler (in case $p$ odd) classes of the virtual bundle $\eta-\xi$, since it is sufficient to consider the situation over some Grassmann variety and then use the naturality of construction. This is in accordance with the similar result for smooth map singularities, where the characteristic classes depend on the virtual bundle $f^*TN-TM$.

\section{Local coincident $p^k$-tuples and their characteristic classes}
\label{coind-char}

Let us consider the case when $q$ is a power of a prime $p$. In this case the cohomology $H^*(K^q(\mathbb R^\mu)/{\Sg_q}, A)$ is zero in dimensions $>(q-1)(\mu-1)$, and its $(q-1)(\mu-1)$-dimensional cohomology is generated by the class $e(A_q)^{n-1}$, see~\cite{vass1988,ossa1996,rot2008,kar2009} for different cases of these results. Here the coefficients $A$ are $Z_p$ for $p=2$ or odd $\mu$, and $Z_p$ with sign action of $\Sg_q$ in other cases.

Thus similar to the above definition, we can put 
$$
e(A_q\otimes (\xi^\perp\oplus\eta)) = s_{q, d}(\xi^\perp\oplus\eta) e(A_q)^{\mu-1} +\ldots,
$$
where $d=\dim\eta-\dim\xi = \dim(\xi^\perp\oplus\eta) - \mu$ and $\ldots$ denotes the terms with the dimension of the corresponding class in $H^*(M, Z_p)$ larger, then that of $s_{q, d}$. Note that the dimension of $s_{q, d}$ equals $(q-1)(d+1)$.

\begin{defn}
We call $s_{q, d}(\eta-\xi)$ the \emph{leading characteristic class} of local coincident $q$-tuples for prime powers $q$ at codimension $d$.
\end{defn}

In some cases this class (and possibly some higher classes) can be calculated.

We are going to prove the following result, showing that $s_{q, d}$ are nontrivial in the case of $q=2^k$.

\begin{thm}
\label{2k-fold-leading}
Let $q$ be a power of two. Denote $w_i$ the Stiefel-Whitney classes of $\eta-\xi$. Then as a polynomial in the Stiefel-Whitney classes of $\eta-\xi$
$$
s_{q, d}(\eta-\xi) \equiv w_{d+1}^{q-1} \mod w_{d+2}, w_{d+3}, \ldots.
$$
\end{thm}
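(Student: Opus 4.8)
The plan is to reduce the computation to a universal bundle over a product of infinite real projective spaces, where the Euler class of $A_q \otimes (\xi^\perp\oplus\eta)$ can be written down explicitly as a product of linear factors, and then extract the coefficient of $e(A_q)^{\mu-1}$ modulo the ideal generated by the higher Stiefel--Whitney classes. Since $s_{q,d}$ depends only on the virtual bundle $\eta-\xi$ through its Stiefel--Whitney classes, and since we are working modulo $w_{d+2}, w_{d+3},\dots$, it suffices to take $\xi$ trivial of rank $\mu-(d+1)$ (say) and $\eta$ a sum of $d+1$ real line bundles $L_1\oplus\cdots\oplus L_{d+1}$ over $(\mathbb{RP}^\infty)^{d+1}$; then $w(\eta-\xi) = \prod_{j=1}^{d+1}(1+t_j)$ with $t_j = w_1(L_j)$, so $w_{d+1} = t_1 t_2\cdots t_{d+1}$ and the higher $w_i$ vanish, i.e. the quotient ring by $(w_{d+2},\dots)$ is exactly the subring generated by the elementary symmetric functions. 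On this model, $\xi^\perp\oplus\eta$ is stably $\eta$, and since $q=2^k$ is a power of two we work mod $2$ where the sign action is trivial and $A_q\otimes(-)$ is additive in characteristic classes.

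The key computation is the Euler class of $A_q\otimes L$ for a line bundle $L$ over $K^q(\mathbb{R}^\mu)\times M$, or rather its reduction. Here I would use that mod $2$ the total Stiefel--Whitney class multiplies under tensoring with a line bundle by the standard formula, and that $A_q$, as a $\Sigma_q$-representation, decomposes after restriction along the standard inclusion $(\mathbb{Z}/2)^k \hookrightarrow \Sigma_{2^k}$ (a transitive elementary abelian subgroup) into the $2^k-1$ nontrivial one-dimensional representations. Consequently the equivariant Euler class of $A_q\otimes\eta$, restricted to the subgroup and to the line-bundle model, becomes the product $\prod_{\chi\neq 0}\prod_{j=1}^{d+1}(\chi + t_j)$ over the $2^k-1$ nontrivial characters $\chi$ of $(\mathbb{Z}/2)^k$ and the $d+1$ roots $t_j$. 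The restriction map $H^*_{\Sigma_q}(K^q(\mathbb{R}^\mu)/\Sigma_q)\to H^*_{(\mathbb{Z}/2)^k}(\cdots)$ sends $e(A_q)$ to $\prod_{\chi\neq 0}\chi$ (a Dickson-type element), which is injective in the relevant range by the cited results of Vassiliev, Ossa, and others, so it is enough to identify coefficients after restriction.

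The extraction step is then a symmetric-function manipulation: expand $\prod_{\chi\neq 0}\prod_{j}(\chi+t_j)$ and collect the coefficient of $\big(\prod_{\chi\neq0}\chi\big)^{\mu-1}$. The top term in the $\chi$'s is $\big(\prod_{\chi\neq 0}\chi\big)^{d+1}$, and to promote this to degree $\mu-1$ one multiplies by the leading Dickson class coming from the trivial summand $\xi^\perp$ of rank $\mu-(d+1)$ — this is the mechanism already used in the definition of $s_{q,d}$. What multiplies the full power of $\prod_{\chi\neq0}\chi$, after discarding everything that lies in the ideal $(w_{d+2},\dots) = (e_{d+2},\dots)$, is exactly the term where from each factor $\prod_j(\chi+t_j)$ one takes the constant-in-$\chi$ part $\prod_j t_j = w_{d+1}$; there are $q-1=2^k-1$ such factors, giving $w_{d+1}^{q-1}$, while every other choice either involves a $\chi$ to too low a power (killed by the injectivity/grading) or produces a symmetric function with a part $>d+1$, hence lands in the higher ideal. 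I expect the main obstacle to be making this last ``every other term is negligible'' claim rigorous: one must check carefully, using the grading of $H^*(K^q(\mathbb{R}^\mu)/\Sigma_q)$ (which vanishes above degree $(q-1)(\mu-1)$) together with the structure of the Dickson invariants, that no lower power of $\prod_{\chi\neq0}\chi$ can be compensated by an $M$-class of complementary degree that survives mod $(w_{d+2},\dots)$ — this is where the hypothesis $q=2^k$ is essential, since for general $q$ the cohomology $H^*(K^q(\mathbb{R}^\mu)/\Sigma_q)$ is not a polynomial ring and the clean factorization is lost.
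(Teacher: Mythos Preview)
Your approach is correct in outline but takes a genuinely different route from the paper's.  The paper works over the Grassmannian $G_{n,d+1}$ with the tautological bundle $\gamma$ and an \emph{explicit} fiberwise map $\gamma\to\epsilon^{n+d}$ coming from the polynomial map $x\mapsto(x_1-x_{n+d+1}^2,\ldots,x_{n+d}-x_{n+d+1}^{n+d+1})$.  It then locates the coincident $q$-tuples \emph{geometrically}: these are configurations in $Q^q$ lying on a translate of the moment curve, which for small $\delta_i$ form a single $\Sy_q$-orbit spanning a fixed $(q-1)$-plane $L$; the condition $V\supseteq L$ in $G_{n,d+1}$ is the Schubert cycle Poincar\'e dual to $w_{d+1}(\gamma^\perp)^{q-1}$.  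The reduction modulo $(w_{d+2},w_{d+3},\ldots)$ is then read off from the defining relations of $H^*(G_{n,d+1};\mathbb Z_2)$.  So the paper's argument is a transversality computation, not a splitting-principle computation.

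Your splitting-principle route also works, and in fact your stated ``main obstacle'' dissolves once you drop the detour through the elementary abelian subgroup $(\mathbb Z/2)^k$ and Dickson invariants.  Working directly in $H^*(K^q(\mathbb R^\mu)/\Sg_q\times M;\mathbb Z_2)$ with the identity $e(A_q\otimes L)=\sum_{i=0}^{q-1}a_i\,t^{q-1-i}$ (where $a_i=w_i(A_q)$, $a_{q-1}=e(A_q)$), your model gives
\[
e\bigl(A_q\otimes(\xi^\perp\oplus\eta)\bigr)=e(A_q)^{\mu-1}\prod_{j=1}^{d+1}\Bigl(t_j^{q-1}+a_1t_j^{q-2}+\cdots+a_{q-1}\Bigr).
\]
Every monomial in the product that carries any positive $a$-degree, when multiplied by $e(A_q)^{\mu-1}$, lands in $H^{>(q-1)(\mu-1)}(K^q(\mathbb R^\mu)/\Sg_q)$, which is zero for $q$ a prime power by the cited results.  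Hence the only surviving term is $e(A_q)^{\mu-1}\prod_j t_j^{q-1}=e(A_q)^{\mu-1}w_{d+1}^{q-1}$, giving $s_{q,d}=w_{d+1}^{q-1}$ on the nose in your model; there is no compensation to worry about.  The restriction to $(\mathbb Z/2)^k$ only obscures this, since injectivity of that restriction on $H^*(K^q(\mathbb R^\mu)/\Sg_q)$ is not something you get for free.  In short: your algebraic argument is valid and arguably shorter, while the paper's geometric argument has the advantage of exhibiting an explicit map and identifying the class with a concrete Schubert cycle.
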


In order to prove Theorem~\ref{2k-fold-leading} we are going to consider some subspace of the configuration space $K^q(\mathbb R^n)$. Such subspaces were introduced in~\cite{hung1990} and proved to be very useful in describing the cohomology of the symmetric group modulo $2$.

\begin{defn}
Let $q=2^k$ and consider a sequence $\delta_1, \ldots, \delta_k$ of positive integers such that for any $l<k$
$$
\delta_l > \sum_{i=l+1}^k \delta_i.
$$
Let $Q_0^1(\mathbb R^n)$ be the configuration, consisting of one point at the origin. 

Let by induction $Q_0^q(\mathbb R^n, \delta_1, \ldots, \delta_k)$ be the set of all $q$-point configurations, such that the first $q/2$ points form a configuration of $Q_0^{q/2}(\mathbb R^n, \delta_2, \ldots, \delta_k)$, shifted by a vector $u$ of length $\delta_1$, and the other $q/2$ points form a configuration of $Q_0^{q/2}(\mathbb R^n, \delta_2, \ldots, \delta_k)$, shifted by a vector $-u$.
\end{defn}

\begin{defn}
\label{Qq-defn2}
A configuration in $Q_0^q(\mathbb R^n, \delta_1, \ldots, \delta_k)$ can also be described inductively as $x_1,\ldots, x_q\in\mathbb R^n$ such that all the distances $\dist(x_{2i-1}, x_{2i}) = 2\delta_k$ and the midpoints of $[x_{2i-1}, x_{2i}]$ form a configuration of $Q_0^{q/2}(\mathbb R^n, \delta_1, \ldots, \delta_{k-1})$. 
\end{defn}

Note that $Q_0^q(\mathbb R^n, \delta_1, \ldots, \delta_k)$ is always a product of $q-1$ spheres of dimension $n-1$, and we shall omit $\delta_i$ in the notation since it does not change the diffeomorphism type of $Q^q(\mathbb R^n)$. Then we can naturally define the space $Q_M^q(\xi)\subset K_M^q(\xi)$ for any vector bundle $\xi : E(\xi)\to M$ as a bundle of corresponding $Q_0^q(\xi^{-1}(x))$ for $x\in M$.

Note that the Definition~\ref{Qq-defn2} (distance and midpoint characterization) can be applied to any Riemannian manifold $M$, if we allow the last center point (configuration $Q_0^1$) be any $x\in M$. The distances should be chosen small enough in order for the midpoint to be unique.

\begin{defn}
Let $M$ be a Riemannian manifold. Define $Q^q(M, \delta_1, \ldots, \delta_k)\subset K^q(M)$ for $q=2^k$ inductively as follows. 

$Q^1(M) = M$.

For $q\ge 2$ let $Q^q(M, \delta_1, \ldots, \delta_k)$ be the set of $q$-tuples $x_1,\ldots, x_q\in M$ such that all the distances $\dist(x_{2i-1}, x_{2i}) = 2\delta_k$ and the midpoints of $[x_{2i-1}, x_{2i}]$ form a configuration of $Q^{q/2}(M, \delta_1, \ldots, \delta_{k-1})$. 
\end{defn}

The following lemma allows to guarantee not only local singularities, but singularities of some finite size from considering $Q_M^q(TM)$ as the configuration space.

\begin{lem}
\label{metric-Qq}
Let the injectivity radius of $M$ be $r$ and for all $i=1,\ldots, k$
$$
2\delta_i < r.
$$
Then $Q^q(M, \delta_1, \ldots, \delta_k)$ is a fiber bundle (the bundle map is the last stage midpoint) over $M$, and is naturally homeomorphic to $Q_M^q(TM)$
\end{lem}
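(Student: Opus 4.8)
The plan is to induct on $k$, where $q = 2^k$, mirroring the inductive structure of the definition of $Q^q(M,\delta_1,\ldots,\delta_k)$ and of $Q_M^q(TM)$. The base case $k=0$ is trivial: $Q^1(M) = M = Q_M^1(TM)$, and the bundle map is the identity. For the inductive step, suppose the statement holds for $q/2 = 2^{k-1}$, so that $Q^{q/2}(M,\delta_1,\ldots,\delta_{k-1})$ is a fiber bundle over $M$ via the last-stage midpoint map and is homeomorphic over $M$ to $Q_M^{q/2}(TM)$.

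First I would set up the map realizing $Q^q(M,\delta_1,\ldots,\delta_k)$ as a bundle. A point of $Q^q(M,\delta_1,\ldots,\delta_k)$ is a $q$-tuple $(x_1,\ldots,x_q)$ with $\dist(x_{2i-1},x_{2i}) = 2\delta_k$ for each $i=1,\ldots,q/2$, and with the midpoints $m_i$ of the geodesic segments $[x_{2i-1},x_{2i}]$ forming a configuration in $Q^{q/2}(M,\delta_1,\ldots,\delta_{k-1})$. Here the hypothesis $2\delta_k < r$ guarantees (by the definition of injectivity radius) that each pair $x_{2i-1},x_{2i}$ at distance $2\delta_k < r$ is joined by a unique minimizing geodesic, hence has a well-defined midpoint $m_i$, and moreover $\dist(x_{2i-1}, m_i) = \dist(x_{2i}, m_i) = \delta_k < r$. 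Conversely, given the midpoint $m_i$ and the unit tangent vector at $m_i$ pointing toward $x_{2i}$, the pair $(x_{2i-1},x_{2i})$ is recovered by $x_{2i} = \exp_{m_i}(\delta_k v_i)$, $x_{2i-1} = \exp_{m_i}(-\delta_k v_i)$; this is where I use that $\delta_k < r$ again, so $\exp_{m_i}$ is a diffeomorphism on the disc of radius $r$. Thus the data of the pair $(x_{2i-1},x_{2i})$ with the prescribed distance is equivalent to the data of $(m_i, v_i)$ with $v_i$ a unit vector in $T_{m_i}M$, i.e.\ a point of the unit sphere bundle $S(TM)$ pulled back along $m_i$. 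Iterating this fiberwise identification over all $i$ and then composing with the inductive identification of the midpoint configuration $(m_1,\ldots,m_{q/2})$ with a point of $Q_M^{q/2}(TM)$ gives the desired homeomorphism $Q^q(M,\delta_1,\ldots,\delta_k) \cong Q_M^q(TM)$ over $M$, where on the right side the outermost splitting $(\tau_1,\ldots,\tau_q) \mapsto ((\tau_1+\tau_2)/2,\ldots)$ together with the difference vectors $\tau_{2i}-\tau_{2i-1}$ of length $2\delta_k$ matches exactly the exponential construction above. That the total map down to $M$ is a locally trivial fiber bundle follows because each stage is a sphere bundle (the fiber of $Q_0^q(\mathbb R^n)$ being a product of $q-1$ spheres $S^{n-1}$, as already noted in the excerpt), and local triviality is inherited from local triviality of $S(TM)$ and of the inductively constructed $Q^{q/2}$; alternatively, one observes the homeomorphism with $Q_M^q(TM)$, which is manifestly a bundle by construction, and transports the bundle structure back.

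The main obstacle, and the point requiring the most care, is verifying that the distance and midpoint constraints are genuinely consistent across all scales $\delta_1,\ldots,\delta_k$ simultaneously — that is, that a choice of the small vectors $v_i$ at the bottom level does not force any intermediate pair of points to exceed the injectivity radius, so that all the midpoints at every level remain well-defined and unique. This is exactly what the chain of inequalities $2\delta_i < r$ for all $i$, together with the ``separation'' condition $\delta_l > \sum_{i>l}\delta_i$ from the definition of $Q_0^q$, is designed to control: it ensures the configuration stays within a ball of radius $\sum_i \delta_i < r$ (roughly) around the base point, keeping all geodesic midpoint operations in the injective range. I would make this precise by an auxiliary induction showing that every point $x_j$ of a configuration in $Q^q(M,\delta_1,\ldots,\delta_k)$ lies within distance $\sum_{i=1}^k \delta_i$ of the center point, and that consequently each pairwise geodesic used at level $l$ has length at most $2\delta_l < r$. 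Once this bookkeeping is in place, the inductive identification with $Q_M^q(TM)$ and the fiber-bundle conclusion both follow formally, since $Q_M^q(TM)$ is by its very definition a bundle over $M$ with fiber $Q_0^q(T_xM) \cong (S^{n-1})^{q-1}$.
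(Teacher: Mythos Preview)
Your approach is essentially the paper's: induct on $k$, and at each step recognise that the passage from $(x_{2i-1},x_{2i})$ to the midpoint $m_i$ together with the unit direction $v_i\in S(T_{m_i}M)$ exhibits $Q^q(M,\ldots)$ as a product-of-spheres bundle over $Q^{q/2}(M,\ldots)$. That is exactly what the paper does.

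There is, however, one genuine step you skip that the paper makes explicit. Your construction places the outermost sphere over the $i$-th midpoint inside $T_{m_i}M$, i.e.\ it realises $Q^q(M,\ldots)$ as the sphere bundle of $\bigoplus_i \pi_i^*(TM)$ over $Q^{q/2}(M,\ldots)$, where $\pi_i$ is the $i$-th point map. But $Q_M^q(TM)$, by its definition as a fibrewise $Q_0^q(T_xM)$, has all the spheres sitting in the single tangent space $T_xM$ at the centre point, i.e.\ it is the sphere bundle of $\bigoplus_i \pi^*(TM)$ with $\pi$ the centre map. Your sentence ``the difference vectors $\tau_{2i}-\tau_{2i-1}$ \ldots\ match exactly the exponential construction above'' conflates vectors in $T_{m_i}M$ with vectors in $T_xM$; these live in different fibres. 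The paper closes this by observing that each $\pi_i$ is homotopic to $\pi$ (slide each point to the midpoint, and repeat), so the pullback bundles $\pi_i^*(TM)$ and $\pi^*(TM)$ are isomorphic, and hence the two sphere bundles agree. You should insert this argument (or an equivalent parallel-transport identification along the short geodesics, which is available since everything stays inside the injectivity radius).

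A secondary remark: your final paragraph about ``verifying that the distance and midpoint constraints are consistent across all scales'' is not needed and somewhat mis-stated. The definition of $Q^q(M,\delta_1,\ldots,\delta_k)$ is already recursive from the centre outward; the only geodesic uniqueness required at level $k$ is for the pairs at distance $2\delta_k$, and the hypothesis $2\delta_k<r$ handles this directly. Choosing the outermost directions $v_i$ has no effect on any previously formed midpoint, so there is no cross-scale consistency to check. The separation condition $\delta_l>\sum_{i>l}\delta_i$ is there only to guarantee the $q$ points are pairwise distinct, and that holds symmetrically on both sides of the homeomorphism.
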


\begin{proof}
Let us prove by induction. For any configuration $(x_1,\ldots, x_q)\in Q^q(M,\delta_1, \ldots, \delta_k)$ the midpoints of pairs $[x_1, x_2], [x_2,x_3],\ldots,[x_{q-1}, x_q]$ form a configuration in $Q^{q/2}(M,\delta_1, \ldots, \delta_{k-1})$. Since $2\delta_k<r$, then knowing the midpoint of $[x_1,x_2]$, the possible positions of the points $x_1,x_2$ form a sphere. 

So $Q^q(M,\ldots)$ is a product-of-spheres bundle over $Q^{q/2}(M,\ldots)$. Moreover, these spheres are spheres of the vector bundles $\pi_i^*(TM)$, where $\pi_i : Q^{q/2}(M,\ldots)\to M$ is the map, assigning to a configuration its $i$-th point. Note that the maps $\pi_i$ are all homotopic to the centerpoint map $\pi :Q^{q/2}(M,\ldots)\to M$ (the homotopy can be obtained by deforming a point $x_{2i-1}$ or $x_{2i}$ to the midpoint of $[x_{2i-1}, x_{2i}]$, and then repeating inductively), hence all the vector bundles are equivalent to $\pi^*(TM)$. Now the proof is completed by applying the inductive assumption.
\end{proof}

The space $Q_0^q$ (or $Q^q$) is not invariant under the natural $\Sg_q$-action, but it is invariant under the action of a certain Sylow subgroup.

\begin{defn}
Let $q=2^k$. Denote $\Sy_q$ the Sylow subgroup of $\Sg_q$, generated by all permutations of two consecutive blocks $[a2^l + 1, a2^l + 2^{l-1}]$ and $[a2^l + 2^{l-1}+1, (a+1)2^l]$, where $2\le l \le k$ and $0\le a \le 2^{k-l}-2$.
\end{defn}

\begin{lem}
\label{Qq}
The manifold $Q_0^q(\mathbb R^n)$ is $\Sy_q$-invariant. The cohomology $H_{\Sy_q}^{(q-1)(n-1)}( Q^q(\mathbb R^n), Z_2)$ is generated by the Euler class $e(A_q)^{n-1}$.
\end{lem}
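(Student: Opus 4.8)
The plan is to reduce both assertions to a single mod~$2$ Euler number computation on the free orbit space $P:=Q_0^q(\mathbb R^n)/\Sy_q$. For the invariance I would use the ``top-down'' recursion for $Q_0^q$ (the first $q/2$ points are $C_1+u$ and the last $q/2$ points are $C_2-u$, with $C_1,C_2\in Q_0^{q/2}(\mathbb R^n)$ and $|u|=\delta_1$) together with the matching structure $\Sy_q\cong(\Sy_{q/2}\times\Sy_{q/2})\rtimes\Sg_2$ of the Sylow $2$-subgroup, in which the two copies of $\Sy_{q/2}$ permute the two halves of a configuration and the extra $\Sg_2$ interchanges them. The subgroup $\Sy_{q/2}\times\Sy_{q/2}$ preserves $Q_0^q$ by induction applied to each half, while the interchanging $\Sg_2$ sends $(C_1+u,\,C_2-u)$ to $(C_2+(-u),\,C_1-(-u))$, which is again of the required form with translation vector $-u$; the base case $q=2$ is the antipodal action on $S^{n-1}$. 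Splitting off the $\Sg_q$-fixed barycentre, $Q^q(\mathbb R^n)\cong Q_0^q(\mathbb R^n)\times\mathbb R^n$ is $\Sy_q$-invariant as well.

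Next, since $\Sy_q\subset\Sg_q$ acts freely on $K^q(\mathbb R^n)$ it acts freely on $Q^q(\mathbb R^n)$, so $H^*_{\Sy_q}(Q^q(\mathbb R^n),\mathbb Z_2)\cong H^*(P,\mathbb Z_2)$. An easy induction gives $\dim Q_0^q(\mathbb R^n)=(n-1)+2\dim Q_0^{q/2}(\mathbb R^n)=(q-1)(n-1)$; as the action is free and (for $n\ge2$) $Q_0^q(\mathbb R^n)$ is a closed connected manifold, $P$ is a closed connected $(q-1)(n-1)$-manifold, whence $H^{(q-1)(n-1)}(P,\mathbb Z_2)\cong\mathbb Z_2$ by Poincar\'e duality. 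It then remains to show that $e(A_q)^{n-1}$ is the nonzero class. Writing $\mathbf A$ for the rank-$(q-1)(n-1)$ bundle $Q_0^q(\mathbb R^n)\times_{\Sy_q}A_q^{\oplus(n-1)}$ over $P$, multiplicativity of Stiefel--Whitney classes identifies $e(A_q)^{n-1}$ with $w_{(q-1)(n-1)}(\mathbf A)$, the mod~$2$ Euler class of $\mathbf A$ (which vanishes non-equivariantly, since $A_q$ is pulled back from a point); it is nonzero exactly when a generic section of $\mathbf A$ has an odd number of zeros.

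To produce such a section I would take $p\colon\mathbb R^n\to\mathbb R^{n-1}$ forgetting the last coordinate and set $s(x_1,\dots,x_q)=[p(x_1),\dots,p(x_q)]$, an $\Sy_q$-equivariant section of $A_q^{\oplus(n-1)}\cong(\mathbb R^{n-1})^q/\Delta$ over $Q_0^q(\mathbb R^n)$ descending to a section $\bar s$ of $\mathbf A$. Its zero set $Z^q$ is the set of configurations of $Q_0^q(\mathbb R^n)$ lying on one vertical line. Unwinding the top-down recursion inductively shows every such configuration must lie on the fixed axis $\mathbb R e_n$ (so $u=\pm\delta_1 e_n$, and both half-configurations are themselves vertical and centred on $\mathbb R e_n$), which yields $|Z^q|=2|Z^{q/2}|^2$, hence $|Z^q|=2^{q-1}$. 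A second induction shows $s$ meets the zero section transversally along $Z^q$: at a point $(C_1+u,\,C_2-u)\in Z^q$ the derivative of $s$ first forces the variations of $C_1$ and $C_2$ to vanish (transversality for $q/2$), and then the variation of $u$. Finally, $Z^q=Q_0^q(\mathbb R^n)\cap\{\text{configurations on }\mathbb R e_n\}$ is $\Sy_q$-invariant and $\Sy_q$ acts freely on it, so, since $|Z^q|=2^{q-1}=|\Sy_q|$, the quotient $\bar s^{-1}(0)=Z^q/\Sy_q$ is a single point. Thus $\mathbf A$ has odd mod~$2$ Euler number, so $e(A_q)^{n-1}\ne0$ and it generates $H^{(q-1)(n-1)}(P,\mathbb Z_2)$.

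I expect the main difficulty to be the last paragraph's two inductions --- the exact identification of the vertical configurations inside $Q_0^q(\mathbb R^n)$ and the transversality of $s$ along them --- which together make the numerical coincidence $|Z^q|=|\Sy_q|=2^{q-1}$ work. Both rest on carefully unwinding the recursive structure of $Q_0^q(\mathbb R^n)$; everything else in the argument is formal.
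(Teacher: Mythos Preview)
Your approach is essentially identical to the paper's: both use the projection $\mathbb R^n\to\mathbb R^{n-1}$ forgetting the last coordinate to produce a section of $A_q^{\oplus(n-1)}$ whose zero set consists of the configurations lying on the $x_n$-axis, and then observe that these form a single $\Sy_q$-orbit, which is Poincar\'e dual to $e(A_q)^{n-1}$. The paper's proof is terser (it simply asserts ``exactly one orbit'' where you carry out the inductive count $|Z^q|=2|Z^{q/2}|^2=2^{q-1}=|\Sy_q|$ and the transversality check), but the substance is the same.
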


\begin{proof}
The first claim is obvious by definition.

Consider the natural projection $\psi : \mathbb R^n\to \mathbb R^{n-1}$, defined by
$$
\psi(x_1,\ldots,x_{n-1},x_n) = (x_1,\ldots,x_{n-1}).
$$ 
For this projection the only configurations in $Q^q(\mathbb R^n)$ that give coincident $q$-tuples are those with all coordinates zero except $x_n$. But such configurations form exactly one orbit of $\Sy_q$. Since this orbit is Poincare dual to the Euler class $e(A_q)^{n-1}$, which is responsible for coincident $q$-tuples in this case, we see that $e(A_q)^{n-1}$ coincides with the fundamental class of the manifold $Q^q(\mathbb R^n)/\Sy_q$.
\end{proof}

In is well-known~\cite{bro1982}, that if we consider the $\Sg_q$-equivariant cohomology with coefficients $Z_p$, then the cohomology does not change when passing to $p$-Sylow subgroup. Here we do not only pass to a Sylow subgroup, but also refine the configuration space $K^q$ to a manifold, to allow some direct geometric reasoning as in the proof of Lemma~\ref{Qq}.

Using Lemma~\ref{Qq} the leading characteristic class of coincident $2^k$-tuples can be defined as follows.

\begin{defn}
Denote $\pi : Q_M^q(\xi)/\Sy_q \to M$ the natural projection. Then 
$$
s_{q, d}(\eta-\xi) = \pi_!(e(A_q\otimes \pi^*\eta)),
$$
i.e. geometrically it is a projection of the set of coincident $q$-tuples in $Q_M^q(\xi)/\Sy_q$ to $M$.
\end{defn}

This definition is the same because for the restricted set of coincident $q$-tuples $Z(\psi, \xi, eta)\subseteq Q^q$ of a fiberwise map $\psi : \xi\to \eta$ the stability holds in the following exact form
\begin{equation}
\label{coinc-stability-pow2}
Z((\psi\oplus\iota)^q, \xi\oplus\zeta, \eta\oplus\zeta) = Z(\psi^q, \xi, \eta).
\end{equation}
Then passing to the case of trivial $\xi$ we see that the topmost cohomology of $Q_0^q(\mathbb R^\mu)$ is the same as in $K^q(\mathbb R^\mu)$. The map $\pi_!$ ``divides'' by the fundamental class of $Q_0^q(\mathbb R^\mu)/\Sy_q$ in $H^*(Q_0^q(\mathbb R^\mu)/\Sy_q\times M)$, similar to the first definition of $s_{q, d}$.
 
\section{Proof of Theorem~\ref{2k-fold-leading} and some corollaries}

Now we are ready to prove Theorem~\ref{2k-fold-leading} using the geometric definition of $s_{q,d}$.

Let us find $s_{q,d}$ for a certain fiberwise map over the Grassmannian $M=G_{n, d+1}$ of linear $n$-subspaces in $\mathbb R^{n+d+1}$. Denote the canonical $n$-dimensional bundle $\gamma : E(\gamma)\to G_{n, d+1}$. Now consider the map 
$$
f :\mathbb R^{n+d+1}\to \mathbb R^{n+d}
$$
given by
$$
f(x_1, \ldots, x_{n+d+1}) = (x_1 - x_{n+d+1}^2, x_2 - x_{n+d+1}^3,\ldots, x_{n+d} - x_{n+d+1}^{n+d+1}).
$$
Each fiber of $\gamma$ is mapped with this map to $\mathbb R^{n+d}$, so $f$ can be considered as a fiberwise map of $\gamma$ to $\epsilon^{n+d}$. 

Let us describe the coincident $q$-tuples of $f$ in $Q_M^q(\gamma)$. First, note that there is a natural inclusion $Q_M^q(\gamma)\to Q_0^q(\mathbb R^{n+d+1})$. A configuration of $q$ points $(p_1, \ldots, p_q)\in Q_0^q(\mathbb R^{n+d+1})$ is mapped to one point $y$ if they lie on a single curve $C(c_1, \ldots, c_{n+d})$, given by the parameterization
$$
x_1 = c_1 + t^2,\ x_2 = c_2 + t^3,\ \ldots,\ x_{n+d} = c_{n+d} + t^{n+d+1},\ x_{n+d+1} = t.
$$

Consider a single curve $C(c_1,\ldots,c_{n+m})$ and the set $Z(c_1,\ldots, c_{n+m})$ of all configurations in $Q^q(\mathbb R^n)$ (not $Q_0^q$!), lying entirely on $C(c_1,\ldots,c_{n+d})$. 

Consider a configuration $(p_1,\ldots, p_q)\in Z(c_1,\ldots, c_{n+m})$ and assume that the points are ordered w.r.t. the coordinate $x_{n+d+1}$, which corresponds with the parameter on the curve. From Definition~\ref{Qq-defn2} it is clear that if $\delta_i$ are small enough in the definition (so that the curvature of $C(c_1,\ldots, c_{n+m})$ becomes negligible), then the configuration $(p_1,\ldots, p_q)\in Z(c_1,\ldots, c_{n+m})$ is determined uniquely by any one point $p_i$, which can be chosen arbitrarily. In other words, $p_i$ is a smooth parameter on $Z(c_1, \ldots, c_{n+m})/\Sy_q$.

Denote 
$$
Z=\bigcup_{c_1,\ldots,c_{n+d}\in\mathbb R} Z(c_1, \ldots, c_{n+d})\subset Q^q(\mathbb R^{n+d+1}).
$$
We have already noted that the map $g_i : Z/\Sy_q\to\mathbb R^{n+d+1}$ taking any configuration to its $i$-th point w.r.t. the coordinate $x_{n+d+1}$ is a diffeomorphism. For any configuration $(p_1,\ldots, p_q)\in Z$ put 
$$
h(p_1,\ldots, p_q) = \sum_{i=1}^q g_i(p_1, \ldots, p_q),
$$ 
i.e. the center point of the configuration. The map $h$ is smooth on $Z/\Sy_q$ and for small enough $\delta_i$ it is a diffeomorphism onto $\mathbb R^{n+d+1}$. Hence $h^{-1}(0)$ is the only configuration in $Q_0^q(\mathbb R^{n+d+1})$ that is mapped into single point by $f$.

Since the configuration $h^{-1}(0)$ lies on a translate of the moment curve, which is a convex curve, then its points span some $q-1$-dimensional linear subspace $L\subset\mathbb R^{n+d+1}$. Now any linear space $V\in G_{n, d+1}$, that is supposed to have coincident $q$-tuples in the fiber $Q_0^q(V)$, must contain $L$. Moreover, it can be easily seen that the map $f$ is transversal to zero and the condition $V\supseteq L$ defines the Poincare dual to $s_{q, d}$ homology class. From the well-known description of the (dual) Stiefel-Whitney classes it follows that 
\begin{equation}
\label{dual-sw-power}
s_{q, d} = w_{d+1}^{q-1}(\gamma^\perp)
\end{equation}
in the cohomology $H^*(G_{n, d+1}, Z_2)$. Now it suffices to note that this cohomology algebra has generators $w_1(\gamma), \ldots, w_n(\gamma)$ and relations
\begin{equation}
\label{grass-gen}
w_{d+2}(\gamma^\perp) = w_{d+3}(\gamma^\perp) = \dots = 0,
\end{equation}
hence (\ref{dual-sw-power}) holds over arbitrary space modulo higher Stiefel-Whitney classes of $\eta - \xi$, because the number $n$ can be taken arbitrarily large, and the relations in (\ref{grass-gen}) are the only essential relations.

\begin{cor} 
\label{2-coinc}
For double points we have:
$$
s_{2, d} = w_{d+1}.
$$
\end{cor}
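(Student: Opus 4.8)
The plan is to derive Corollary~\ref{2-coinc} as the special case $q=2$ of Theorem~\ref{2k-fold-leading}, promoting the congruence modulo higher Stiefel-Whitney classes to an exact identity by a degree count.

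First I would set $q=2$ in Theorem~\ref{2k-fold-leading}. Since $q-1=1$, the theorem yields
$$
s_{2,d}(\eta-\xi) \equiv w_{d+1} \mod w_{d+2}, w_{d+3}, \ldots,
$$
that is, $s_{2,d}(\eta-\xi) - w_{d+1}$ lies in the ideal $I$ generated by $w_{d+2}, w_{d+3},\ldots$ in the graded polynomial ring $Z_2[w_1,w_2,\ldots]$ of universal Stiefel-Whitney classes, where $w_i$ has degree $i$. Next I would invoke the dimension of the class: as recorded just after the definition of $s_{q,d}$, this class is homogeneous of degree $(q-1)(d+1)$, which for $q=2$ is precisely $d+1$, matching $\deg w_{d+1}$. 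Hence $s_{2,d} - w_{d+1}$ is homogeneous of degree $d+1$. But $I$ is generated by homogeneous elements of degrees $\ge d+2$, so every homogeneous component of an element of $I$ lies in degree $\ge d+2$; in particular $I$ contains no nonzero element of degree $d+1$. Therefore $s_{2,d} - w_{d+1} = 0$, i.e. $s_{2,d} = w_{d+1}$.

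There is essentially no obstacle here: the argument is a one-line grading count once Theorem~\ref{2k-fold-leading} is available, and the only point requiring care is that the congruence in that theorem must be read inside the graded ring of universal characteristic classes, so that the degree bound $(2-1)(d+1) = d+1 < d+2$ genuinely forces the error term to vanish.
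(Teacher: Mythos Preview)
Your proposal is correct and follows exactly the paper's own argument: apply Theorem~\ref{2k-fold-leading} at $q=2$ and observe that the degree $(q-1)(d+1)=d+1$ forbids any contribution from the ideal generated by $w_{d+2}, w_{d+3},\ldots$. The paper states this in one line (``from the dimension considerations''); you have simply spelled out the grading count explicitly.
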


The corollary follows from Theorem~\ref{2k-fold-leading} because in the right part there cannot be anything, depending on $w_{d+2}, w_{d+3},\ldots$ from the dimension considerations.

Corollary~\ref{2-coinc} was actually proved in~\cite{cf1960} for fiberwise maps to trivial bundle. Moreover, it is known that the local double points for maps $M\to\mathbb R^n$ have a relation to global double points for maps $M\to\mathbb R^{n+1}$ (informally, they have the same characteristic class), see~\cite{schw1966} for algebraic description, or~\cite{mcc1978} for some geometric reasoning. 

Let us prove a theorem that gives coincident $q$-tuples for maps of certain projective spaces to $\mathbb R^n$ by the class $s_{q, d}$.

\begin{thm}
\label{proj-coinc}
Suppose that $q$ is a power of two, $q(d+1) < 2^l-1$. Then any continuous map
$$
f : \mathbb RP^{2^l-2-d}\to \mathbb R^{2^l-2}
$$
has multiplicity $\ge q$.
\end{thm}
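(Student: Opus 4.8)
The plan is to deduce a coincident $q$-tuple from the nonvanishing of the leading characteristic class $s_{q,d}$, as set up in the preceding sections. Write $m=2^l-2-d$ and $n=2^l-2$, so that the codimension of $f:\mathbb{RP}^m\to\mathbb{R}^n$ is $n-m=d$. Taking $\xi=TM$ and $\eta=f^*TN$, a local (hence a genuine) coincident $q$-tuple of $f$ is guaranteed once the $\Sg_q$-equivariant Euler class $e(A_q\otimes(\xi^\perp\oplus\eta))$ over $K^q(\mathbb{R}^\mu)\times M$ is nonzero, by Lemma~\ref{triv-reduction}. Since $q=2^k$ is a prime power I would expand this class over the $H^*(M,Z_2)$-module basis of $H_{\Sg_q}^*(K^q(\mathbb{R}^\mu)\times M)$ coming from $H^*(K^q(\mathbb{R}^\mu)/\Sg_q)$: its component at the top basis element $e(A_q)^{\mu-1}$ is $s_{q,d}(\eta-\xi)\in H^{(q-1)(d+1)}(M,Z_2)$, while every other term pairs a higher-degree class on $M$ with a strictly lower basis element, so these cannot cancel the $e(A_q)^{\mu-1}$-component. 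Hence it suffices to prove $s_{q,d}(\eta-\xi)\ne 0$, and since $\eta=f^*T\mathbb{R}^n$ is trivial, $s_{q,d}(\eta-\xi)$ depends only on the Stiefel--Whitney classes of $-T\mathbb{RP}^m$; the whole statement is thus reduced to a computation in $H^*(\mathbb{RP}^m,Z_2)$.

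For this computation I would use $w(T\mathbb{RP}^m)=(1+a)^{m+1}$, where $a\in H^1(\mathbb{RP}^m,Z_2)$ is the generator, so that $w(\eta-\xi)=(1+a)^{-(m+1)}$. The key simplification is that $m+1=2^l-1-d$ and $m<2^l$, so $(1+a)^{2^l}=1+a^{2^l}=1$ in $H^*(\mathbb{RP}^m,Z_2)$, and therefore
$$
w(\eta-\xi)=(1+a)^{-(2^l-1-d)}=(1+a)^{(d+1)-2^l}=(1+a)^{d+1}.
$$
Consequently $w_{d+1}(\eta-\xi)=a^{d+1}$ while $w_j(\eta-\xi)=0$ for every $j\ge d+2$. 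Now Theorem~\ref{2k-fold-leading} applies: it gives $s_{q,d}(\eta-\xi)\equiv w_{d+1}^{q-1}\pmod{w_{d+2},w_{d+3},\dots}$, and since every Stiefel--Whitney class of $\eta-\xi$ of degree $\ge d+2$ vanishes in $H^*(\mathbb{RP}^m,Z_2)$, each monomial of the correction term carries a vanishing factor, so the congruence collapses to
$$
s_{q,d}(\eta-\xi)=\bigl(w_{d+1}(\eta-\xi)\bigr)^{q-1}=a^{(q-1)(d+1)}.
$$
Finally, from $q(d+1)<2^l-1$, i.e. $q(d+1)\le 2^l-2$, I get $(q-1)(d+1)=q(d+1)-(d+1)\le(2^l-2)-(d+1)<2^l-2-d=m$, so $a^{(q-1)(d+1)}\ne 0$; hence $s_{q,d}(\eta-\xi)\ne 0$, the equivariant Euler class is nonzero, and $f$ has a local coincident $q$-tuple, so its multiplicity is at least $q$.

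I expect the main obstacle to be spotting and using the identity $w(-T\mathbb{RP}^m)=(1+a)^{d+1}$, valid precisely because $m+1\equiv-(d+1)\pmod{2^l}$ with $m<2^l$: this is what turns the a priori unknown ``higher Stiefel--Whitney class'' corrections in Theorem~\ref{2k-fold-leading} into a single honest power of $a$, and it is where the arithmetic condition relating $l$, $q$, $d$ really enters. Once this is in hand, the reduction to the nonvanishing of $s_{q,d}$ via Lemma~\ref{triv-reduction} and the final degree count $(q-1)(d+1)<m$ are routine.
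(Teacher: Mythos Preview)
Your proposal is correct and follows essentially the same route as the paper: reduce to the nonvanishing of $s_{q,d}(\eta-\xi)$, use the identity $w(-T\mathbb{RP}^m)=(1+a)^{d+1}$ in $H^*(\mathbb{RP}^m,Z_2)$ (coming from $(1+a)^{2^l}=1$), conclude via Theorem~\ref{2k-fold-leading} that $s_{q,d}=a^{(q-1)(d+1)}$ since the higher Stiefel--Whitney classes vanish, and check $(q-1)(d+1)\le m$. Your write-up simply spells out in more detail the step linking $s_{q,d}\ne 0$ to the nonvanishing of the equivariant Euler class, which in the paper is implicit in the definition of $s_{q,d}$ as the leading coefficient.
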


Note that this theorem gives a coincident $q$-tuple on a configuration from any subspace
$$
Q^q(\mathbb RP^{2^l-2-d}, \delta_1,\ldots, \delta_k)\subset K^q(\mathbb RP^{2^l-2-d})
$$
for any sequence of $\delta_i$, satisfying 
$$
\delta_1 < \pi/4,\forall i\ \delta_i>\delta_{i+1}+\dots+\delta_k.
$$
Here we measure the distance on $\mathbb RP^n$ as the angle between lines.

\begin{proof}
Any map $f : \mathbb RP^m\to \mathbb R^n$ induces a fiberwise map between $\xi=T\mathbb RP^m$ and $\epsilon^n$. Suppose we have some normal bundle $\xi^\perp$ of dimension $k$. The Stiefel-Whitney class of $\epsilon^n-\xi$ is 
$$
w(\epsilon^n-\xi) = (1+u)^{-m-1} = (1+u)^{2^l-m-1} = (1+u)^{d+1},
$$
and by Theorem~\ref{2k-fold-leading} we have $s_{q,d}(\epsilon^n-T\mathbb RP^m) = u^{(q-1)(d+1)}$, which is nonzero since $(q-1)(d+1)\le m=2^l-2-d$.
\end{proof}

\section{Characteristic classes for local coincident $4$-tuples -- calculations}
\label{4-fold-coinc}

Let us give some general schema of calculating $s_{q, d}$ for any particular $q$ and $d$. The class $e(A_q\otimes (\xi^\perp\oplus \eta))$ can be calculated in the assumption that the bundle $\xi^\perp \oplus \eta$ is decomposed into one-dimensional bundles $\tau_1,\ldots,\tau_\nu$ with respective Stiefel-Whitney classes $1+t_1, \ldots, 1 + t_\nu$. Let the Stiefel-Whitney class of the representation $A_q$ in the cohomology $H^*(B\Sy_q, Z_2)$ be 
$$
e(A_q) = 1 + a_1 + \dots + a_{q-1}.
$$
Now the Euler class equals

\begin{equation}
\label{euler-formula}
e(A_q\otimes (\tau_1\oplus\dots\oplus\tau_\nu) ) = \prod_{i=1}^\nu (t_i^{q-1} + t_i^{q-2}a_1+\dots + a_{q-1})
\end{equation}

in the cohomology $H^*(B\Sy_q\times M, Z_2)$. Then we have to map this class to $H^*(K^q(\mathbb R^\mu)\times M, Z_2)$ or $H^*(Q^q(\mathbb R^\mu)\times M, Z_2)$, by the natural map 
$$
K^q(\mathbb R^\mu)/\Sg_q\to B\Sg_q,\ \text{or}\ Q^q(\mathbb R^\mu)/\Sy_q\to B\Sy_q,
$$ 
find the coefficient at $a_{q-1}^{\mu-1}$, and express it in the Stiefel-Whitney classes of $\eta-\xi$. Of course, the knowledge of the cohomology of the symmetric group modulo $2$ and the relations on these cohomology that describe $H^*(Q^q(\mathbb R^\mu)/\Sy_q)$ should be known.

Passing to the particular case $q=4$ note that $\Sy_4$ is the square group $D_8$, and its cohomology is multiplicatively generated by three elements $a, b, c$ such that 
$$
\dim a = \dim c = 1,\quad \dim b = 2,
$$
and the relation $ac=0$. The Stiefel-Whitney class of $A_4$ is
$$
w(A_4) = (1 + (a+c) + b)(1 + c).
$$
The space $Q^q(\mathbb R^n)$ is a product of three $n-1$-spheres, and in the cohomology of $Q^q(\mathbb R^n)/\Sy_q$ we have relations
$$
c^n=b^n=0.
$$ 
Now (\ref{euler-formula}) has the form
$$
e(A_q\otimes (\tau_1\oplus\dots\oplus\tau_\nu) ) = \prod_{i=1}^\nu (t_i^2 + (a+c)t_i+b)(t_i+c).
$$
to find the leading characteristic class of coincident $4$-tuples in codimension $d$ we have to find the coefficient at $(bc)^{\nu-d-1}$ after applying all the relations. We can also add the artificial relation $a=0$ to simplify the situation, since it does not affect anything. Then some direct calculations give, for example
$$
s_{4,0} = w_1^3+w_1w_2,
$$
$$
s_{4,1} = w_2^3 + w_3^2 + w_1w_2w_3 + w_2w_4.
$$
Further cases can be considered too, it even seems plausible to have some explicit formula with summation for $q=4$.

\section{Local coincident $p$-tuples for prime $p$}
\label{p-fold}

Let us consider local coincident $p$-tuples for odd prime $p$. In this case we consider the Euler class $e(A_p\otimes(\xi^\perp\oplus \eta))$ modulo $p$, and therefore we may consider, instead of $\Sg_p$ its $p$-Sylow subgroup $Z_p$ of cyclic permutations. This group acts on $A_p$ without change of orientation, so we do not have to worry about the twisted cohomology coefficients.

Similar to (\ref{euler-formula}), by the splitting principle we decompose $\xi^\perp\oplus \eta$ of dimension $\nu$ into the sum of $k$ two-dimensional oriented (because we do everything $\mod p$) bundles $\sigma_1\oplus\dots\oplus\sigma_k$, when $\nu=2k$, or into the sum $\sigma_1\oplus\dots\oplus\sigma_k\oplus\tau$ with $\dim\tau=1$, when $\nu=2k+1$. In either case we have
$$
e(A_p\otimes(\xi^\perp\oplus \eta)) = \prod_{i=1}^k e(A_p\otimes \sigma_i) = \prod_{i=1}^k (u^2-e_i^{p-1}),
$$
for even $\nu$, and 
$$
e(A_p\otimes(\xi^\perp\oplus \eta)) = u\prod_{i=1}^k (u^2-e_i^{p-1})
$$
for odd $\nu$. In the last two formulas $u=e(A_p)$, and $e_i=e(\sigma_i)$ are the Euler classes of summands. The formula follows from the formula of Pontryagin classes of a tensor product along with the fact that the only nonzero characteristic classes of $A_p$ in $H^*(BZ_p, Z_p)$ are its Euler class $u$ and its topmost Pontryagin class $u^2$.

Let us define the following characteristic classes by the splitting principle: if the Pontryagin classes are expressed through symmetric functions
$$
p_{4i} = \sigma_i(t_1^2, \ldots, t_k^2)
$$
Then put 
$$
\alpha_{p,i} = \sigma_i(t_1^{p-1}, \ldots, t_k^{p-1}),
$$
in the case $p=3$ these are the Pontryagin classes again, in the general case these are some classes of dimension $2(p-1)i$. Now we can rewrite
$$
e(A_p\otimes(\xi^\perp\oplus \eta)) = \sum_{i=0}^k (-1)^i u^{2(k-i)}\alpha_{p,i}(\xi^\perp\oplus \eta)
$$
for even $\nu$ and 
$$
e(A_p\otimes(\xi^\perp\oplus \eta)) = \sum_{i=0}^k (-1)^i u^{2(k-i)+1}\alpha_{p,i}(\xi^\perp\oplus \eta)
$$
for odd $\nu$.

The images of $u^i$ in $H^{(p-1)i}(K^p(\mathbb R^\mu), Z_p)$ are nonzero if $i \le \mu-1$. Hence we obtain a theorem.

\begin{thm}
\label{p-fold-coinc}
The characteristic classes of coincident $p$-tuples in a fiberwise map $\xi\to\eta$ are 
the classes $\alpha_{p,i}(\eta-\xi)$ with $2i\ge \dim\eta-\dim\xi + 1$.
\end{thm}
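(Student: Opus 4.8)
The plan is to obtain the statement directly from Lemma~\ref{triv-reduction} together with the Euler-class computation carried out just above it, by keeping careful track of degrees.

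First I would apply Lemma~\ref{triv-reduction}: fixing a complement $\xi\oplus\xi^\perp=\epsilon^\mu$, a coincident $p$-tuple of $\psi\colon\xi\to\eta$ is guaranteed whenever $e(A_p\otimes(\xi^\perp\oplus\eta))$ is nonzero in $H^{*}_{\Sg_p}(K^p(\mathbb R^\mu)\times M,Z_p)$, where $\mu=\dim\xi+\dim\xi^\perp$ and, writing $\nu=\dim(\xi^\perp\oplus\eta)$, one has $d=\dim\eta-\dim\xi=\nu-\mu$. Since we use $Z_p$ coefficients, I would replace $\Sg_p$ by its cyclic $p$-Sylow subgroup $Z_p$: the restriction $H^{*}_{\Sg_p}(-,Z_p)\to H^{*}_{Z_p}(-,Z_p)$ is injective (split by transfer, see~\cite{bro1982}), so nonvanishing of the Euler class may be tested after restriction, and $Z_p$ acts on $A_p$ through rotations, hence orientation-preservingly, so no twisting of coefficients occurs.

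Next I would use the two facts recalled in this section about $H^{*}_{Z_p}(K^p(\mathbb R^\mu),Z_p)$: it vanishes above degree $(p-1)(\mu-1)$, and the powers $1,u,\dots,u^{\mu-1}$ of $u=e(A_p)$ are all nonzero and lie in pairwise distinct degrees. Because $M$ is a manifold, Künneth over the field $Z_p$ gives $H^{*}_{Z_p}(K^p(\mathbb R^\mu)\times M,Z_p)=H^{*}(M,Z_p)\otimes H^{*}_{Z_p}(K^p(\mathbb R^\mu),Z_p)$; in particular a $u$-polynomial $\sum_{j=0}^{\mu-1}c_ju^{j}$ with $c_j\in H^{*}(M,Z_p)$ vanishes there if and only if every $c_j$ vanishes (the summands with distinct $j$ involve the $u^{j}$, which are nonzero in distinct degrees, hence $H^{*}(M,Z_p)$-independent, and over a field $c_ju^{j}=0$ forces $c_j=0$). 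Substituting the formula established above the statement,
$$
e(A_p\otimes(\xi^\perp\oplus\eta))=\sum_{i\ge 0}(-1)^{i}u^{\,\nu-2i}\,\alpha_{p,i}(\xi^\perp\oplus\eta),
$$
valid in both the even and the odd $\nu$ case, and noting that $\alpha_{p,i}$ is stable, so $\alpha_{p,i}(\xi^\perp\oplus\eta)=\alpha_{p,i}(\eta-\xi)$ (since $\xi^\perp\oplus\eta$ differs from $\eta-\xi$ by a trivial summand), the term of index $i$ survives the restriction precisely when $\nu-2i\le\mu-1$, i.e. when $2i\ge\nu-\mu+1=d+1$. Hence the restricted Euler class equals $\sum_{2i\ge d+1}(-1)^{i}u^{\,\nu-2i}\alpha_{p,i}(\eta-\xi)$, a polynomial of $u$-degree $<\mu$, which is nonzero if and only if at least one $\alpha_{p,i}(\eta-\xi)$ with $2i\ge d+1$ is nonzero. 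Combined with Lemma~\ref{triv-reduction}, nonvanishing of any such class forces a coincident $p$-tuple, which is the assertion; the $\alpha_{p,i}$ here play the exact role played by the classes $s_i$ in the mod~$2$ setting.

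The argument is essentially bookkeeping once the two inputs of this section — the product formula for $e(A_p\otimes(\xi^\perp\oplus\eta))$ in terms of $u$ and the $\alpha_{p,i}$, and the description of $H^{*}_{Z_p}(K^p(\mathbb R^\mu))$ with top class $u^{\mu-1}$ — are available. The only points demanding care are the degree count that turns ``$u^{\nu-2i}$ is not killed'' into the clean inequality $2i\ge\dim\eta-\dim\xi+1$, and the verification that each $\alpha_{p,i}(\xi^\perp\oplus\eta)$ depends only on the stable class $\eta-\xi$, so that the auxiliary complement $\xi^\perp$ disappears from the final list of classes.
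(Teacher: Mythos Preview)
Your proposal is correct and follows exactly the approach of the paper: the paper's entire ``proof'' is the single sentence immediately preceding the theorem (that the images of $u^i$ in $H^{(p-1)i}(K^p(\mathbb R^\mu),Z_p)$ are nonzero for $i\le\mu-1$), combined with the Euler-class expansion in powers of $u$ derived in that section. You have simply spelled out the degree bookkeeping, the passage to the Sylow subgroup, the K\"unneth argument, and the stability of $\alpha_{p,i}$ that the paper leaves implicit.
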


\section{Estimating the equivariant category of configuration spaces}
\label{genus-cat}

In the previous sections some cohomology classes of the configuration space $K^q(M)$ were established to be nonzero, thus bounding from below the multiplicity of a map.

Note that the same classes give new lower bounds for the Lyusternik-Schnirelmann category of $K^q(M)/\Sg_q$ and the Krasnosel'skii-Schwarz genus of $K^q(M)$, thus improving the results of~\cite{vass1988,ossa1996,rot2008,kar2009}, where the estimate was made depending on the dimension of $M$ only. 

Let us remind some definitions and lemmas, mainly from~\cite{schw1966}.

\begin{defn}
Let $X$ be a free $G$-space, the \emph{genus} of $X$ is the minimal size of $G$-invariant open cover (i.e. cover by $G$-invariant open subsets) $\{X_1,\ldots,X_n\}$ of $X$ such that every $X_i$ can be $G$-mapped to $G$. Denote the genus of $X$ by $g(X)$.
\end{defn}

It is also well-known that the genus $g(X)$ estimates the Lyusternik-Schnirelmann category $\cat X/G$ from below. We need the following lemma:

\begin{lem}
\label{gen-by-coh}
If $X$ is a paracompact free $G$-space, and for some $G$-module $\alpha$ the natural cohomology map
$$
\pi_X^* : H^n(BG, \alpha)\to H_G^n(X, \alpha)
$$
is nontrivial, then 
$$
\cat X/G \ge g(X)\ge n+1.
$$
\end{lem}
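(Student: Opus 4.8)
The plan is to obtain the statement from two inputs: the inequality $\cat X/G\ge g(X)$, which was recalled just above, and a bound on $g(X)$ coming from the dimension of a finite ``skeleton'' of $EG$. Throughout I would identify $H^*_G(X,\alpha)$ with the cohomology of the Borel construction $X\times_G EG$ equipped with the local system attached to the $G$-module $\alpha$, so that $\pi_X$ is the map $X\times_G EG\to EG/G=BG$; since the action on $X$ is free and $X$ is paracompact, $X\times_G EG\simeq X/G$ and $\pi_X$ is, up to homotopy, the classifying map of the principal $G$-bundle $X\to X/G$. The inequality $\cat X/G\ge g(X)$ is then immediate: if $V\subseteq X/G$ is open and contractible inside $X/G$, the composite $V\hookrightarrow X/G\to BG$ is nullhomotopic, so the bundle restricted to $V$ is trivial and its total space $X|_V$ (the preimage of $V$ in $X$) admits a $G$-map to $G$; hence a categorical cover of $X/G$ of size $\cat X/G$ is at the same time a genus cover.

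The heart of the argument is to show $g(X)\ge n+1$. First I would recall Schwarz's description of the genus in the form: $g(X)\le k$ if and only if there is a $G$-equivariant map from $X$ to the $k$-fold join $E_kG:=G*\cdots*G$, with $G$ acting diagonally by left translation. Only the forward direction is needed, and it is elementary: given $G$-invariant open sets $X_1,\dots,X_k$ covering $X$ with $G$-maps $\varphi_i:X_i\to G$, choose a $G$-invariant partition of unity $\{\lambda_i\}$ subordinate to $\{X_i\}$ (here paracompactness of $X$, equivalently of $X/G$, is used) and send $x\mapsto\bigl(\lambda_1(x)\varphi_1(x),\dots,\lambda_k(x)\varphi_k(x)\bigr)$, reading the $\lambda_i(x)$ as barycentric coordinates on the join; this is well defined and $G$-equivariant.

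Now I would bring in dimension. The join of $k$ discrete spaces is a CW complex of dimension $k-1$, and since $G$ acts freely on $E_kG$ the quotient $B_kG:=E_kG/G$ is again a CW complex of dimension at most $k-1$. The inclusion $E_kG\hookrightarrow EG$ induces $B_kG\to BG$, and a $G$-map $X\to E_kG$ descends to a map $X/G\to B_kG$ sitting in a commutative triangle with the classifying map $X/G\to BG$. Hence, when $g(X)\le k$, the map $\pi_X^*:H^n(BG,\alpha)\to H^n_G(X,\alpha)$ factors through $H^n(B_kG,\alpha)$, which vanishes once $n\ge k$ because $\dim B_kG\le k-1$ (the local coefficients cause no trouble, as everything takes place over $BG$). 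Therefore $g(X)\le k$ forces $\pi_X^*$ to vanish in all degrees $\ge k$; taking $k=g(X)$ (the inequality being automatic if $g(X)=\infty$), nontriviality of $\pi_X^*$ in degree $n$ yields $n\le g(X)-1$, i.e. $g(X)\ge n+1$. Combined with $\cat X/G\ge g(X)$ this proves the lemma.

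I do not expect a genuine obstacle here; the care goes into bookkeeping. One must check that the whole chain of maps is compatible with the possibly nontrivial $G$-module $\alpha$ (work consistently with the corresponding local systems over $BG$ and its pullbacks, all maps being maps over $BG$), and one must justify $X\times_G EG\simeq X/G$ together with the identification of $\pi_X$ with the classifying map in the paracompact free setting. The single load-bearing fact is that the $k$-fold join $G*\cdots*G$ has dimension $k-1$: this is exactly what converts a covering of size $g(X)$ into the cohomological vanishing range that produces the bound $g(X)\ge n+1$.
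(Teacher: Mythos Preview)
The paper does not actually prove this lemma: it is stated as a known fact ``mainly from~\cite{schw1966}'' and then immediately used. Your proposal supplies the standard Schwarz argument (genus $\le k$ implies a $G$-map $X\to G^{*k}$, hence a factorization of the classifying map through $B_kG$, whose low dimension forces the cohomology map to vanish in degrees $\ge k$), and it is correct. One small caveat: your sentence ``the join of $k$ discrete spaces is a CW complex of dimension $k-1$'' tacitly assumes $G$ is discrete, so that $\dim B_kG\le k-1$; for a positive-dimensional Lie group this dimension count fails and a connectivity argument is needed instead. Since every group appearing in the paper is finite ($\Sg_q$ or its $2$-Sylow $\Sy_q$), this is harmless here, but it is worth flagging if you want the lemma in the generality in which it is stated.
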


Now we can state a special case of the previous lemma.

\begin{lem}
\label{gen-by-euler}
If the natural image of $e(A_q)^n$ is the cohomology of $K^q(M)$ (or $K_M^q(TM)$) is nontrivial, then 
$$
\cat K^q(M)/\Sg_q \ge g(K^q(M))\ge (q-1)n + 1.
$$
\end{lem}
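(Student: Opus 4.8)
The plan is to obtain Lemma~\ref{gen-by-euler} as a direct specialization of Lemma~\ref{gen-by-coh}. Take $G=\Sg_q$, $X=K^q(M)$, and for the coefficient module $\alpha$ take the module in which the Euler class of $A_q$ is defined: since $e(A_q)\in H^{q-1}(B\Sg_q,\alpha_0)$ with $\alpha_0$ equal to $Z_2$ (when working mod $2$) or to the orientation character $\det A_q:\Sg_q\to\{\pm1\}$ (otherwise), its $n$-th cup power satisfies $e(A_q)^n\in H^{(q-1)n}(B\Sg_q,\alpha_0^{\otimes n})$. So I would set $\alpha=\alpha_0^{\otimes n}$ and apply Lemma~\ref{gen-by-coh} in cohomological degree $(q-1)n$.

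First I would record that the hypotheses of Lemma~\ref{gen-by-coh} are met. The group $\Sg_q$ acts freely on $K^q(M)$ (this was observed right after the definition of $K^q(X)$), and $K^q(M)$ is paracompact, being an open subspace of the manifold $M^q$; the same is true of the configuration space bundle $K_M^q(TM)$, which is an open subspace of the manifold obtained as the $q$-fold fibre product of $TM$ over $M$ and carries the free permutation action on fibrewise configurations. Since the action is free, the orbit projection factors through a classifying map $K^q(M)/\Sg_q\to B\Sg_q$, and by construction the element denoted $e(A_q)^n$ in $H_{\Sg_q}^{(q-1)n}(K^q(M),\alpha)$ (resp. in the cohomology of $K_M^q(TM)$) is precisely the image of $e(A_q)^n\in H^{(q-1)n}(B\Sg_q,\alpha)$ under $\pi_X^*$. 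The assumption of the lemma --- that this image is nontrivial --- is therefore exactly the hypothesis ``$\pi_X^*$ is nontrivial in degree $(q-1)n$'' needed to feed into Lemma~\ref{gen-by-coh}.

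Applying Lemma~\ref{gen-by-coh} then gives at once
$$
\cat K^q(M)/\Sg_q\ \ge\ g(K^q(M))\ \ge\ (q-1)n+1,
$$
and the identical argument with $X=K_M^q(TM)$ yields the bound for the configuration space bundle. There is no genuine obstacle here; the one point that needs to be handled with a little care is the orientation bookkeeping --- one must be sure that the twisted coefficient module $\alpha=\alpha_0^{\otimes n}$ used to define $e(A_q)^n$ over $B\Sg_q$ is the same module in which the nontriviality hypothesis over $K^q(M)$ is stated, and that the classifying map pulls it back correctly. Once the coefficients are aligned, the statement is immediate.
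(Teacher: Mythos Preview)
Your argument for the case where the hypothesis is stated over $K^q(M)$ is correct and is exactly what the paper does: feed $X=K^q(M)$, $G=\Sg_q$ into Lemma~\ref{gen-by-coh} in degree $(q-1)n$.

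There is, however, a gap in how you handle the parenthetical hypothesis ``(or $K_M^q(TM)$)''. The conclusion of the lemma is a lower bound on $g(K^q(M))$ and on $\cat K^q(M)/\Sg_q$, \emph{not} on the corresponding invariants of $K_M^q(TM)$. When you write that ``the identical argument with $X=K_M^q(TM)$ yields the bound for the configuration space bundle'', you are proving $g(K_M^q(TM))\ge (q-1)n+1$, which is not the asserted inequality. What is missing is the passage from nontriviality in $K_M^q(TM)$ to nontriviality in $K^q(M)$. The paper supplies this via the $\Sg_q$-equivariant exponential map
\[
\exp : K_M^q(TM)\longrightarrow K^q(M),
\]
constructed earlier in the paper: the image of $e(A_q)^n$ in $H_{\Sg_q}^*(K_M^q(TM))$ factors through $H_{\Sg_q}^*(K^q(M))$, so if it is nonzero in the former it is already nonzero in the latter, and one then applies Lemma~\ref{gen-by-coh} to $K^q(M)$. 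Equivalently, one could invoke monotonicity of the genus under equivariant maps to conclude $g(K^q(M))\ge g(K_M^q(TM))$; either way the $\exp$ map is the missing ingredient.
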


\begin{proof}
Note that we have an $\Sg_q$-equivariant map 
$$
\exp : K_M^q(TM)\to K^q(M).
$$
If $e(A_q)^n\not=0\in K_M^q(TM)$, then $e(A_q)^n\not=0\in K^q(M)$, and then we apply Lemma~\ref{gen-by-coh}.
\end{proof}

Now we can state some corollaries of Theorems~\ref{2k-fold-leading}, \ref{proj-coinc}, \ref{p-fold-coinc}.

\begin{cor}
Suppose that the dual Stiefel-Whitney class of $M$ with $\dim M=m$ has the form
$$
w(TM^\perp) = 1+\bar w_1+\dots+\bar w_{d+1},
$$
$q$ is a power of two, and $\bar w_{d+1}^{q-1}\not=0$. Then 
$$
g(K^q(M))\ge (m + d)(q-1) + 1.
$$

In particular if $q(d+1) < 2^l-1$ then
$$
g(K^q(RP^{2^l-2-d}))\ge (2^l-3)(q-1) + 1.
$$
\end{cor}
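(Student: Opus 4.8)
The plan is to deduce the corollary from Theorem~\ref{2k-fold-leading} and the genus criterion Lemma~\ref{gen-by-euler}. The link between the two is the geometric description of the leading class $s_{q,d}$ from Section~\ref{coind-char}: it is a push-forward to $M$ of a power of the Euler class $e(A_q)$, so non-vanishing of $s_{q,d}$ is stronger than non-vanishing of that Euler-class power in the equivariant cohomology of the configuration space, which is exactly the input that Lemma~\ref{gen-by-euler} converts into a genus bound.

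Concretely, I would apply the machinery with $\xi=TM$ (identified with its disc bundle) and $\eta=\epsilon^{m+d}$, the trivial bundle of rank $m+d$, so that the codimension of Theorem~\ref{2k-fold-leading} is $d=\dim\eta-\dim\xi$ and
$$
w(\eta-\xi)=w(-TM)=w(TM^\perp)=1+\bar w_1+\dots+\bar w_{d+1}.
$$
Since by hypothesis the Stiefel--Whitney classes $w_j(\eta-\xi)=\bar w_j$ with $j\ge d+2$ all vanish, the congruence of Theorem~\ref{2k-fold-leading} becomes the equality $s_{q,d}(\eta-\xi)=\bar w_{d+1}^{\,q-1}$, which is nonzero in $H^{(q-1)(d+1)}(M,Z_2)$ by assumption. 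On the other hand, by the geometric definition of $s_{q,d}$ (with $\pi:Q_M^q(TM)/\Sy_q\to M$ the natural projection), the bundle $A_q\otimes\pi^*\eta$ is isomorphic to $A_q^{\oplus(m+d)}$ because $\pi^*\eta$ is trivial, so its $\Sy_q$-equivariant Euler class is the pullback of $e(A_q)^{m+d}\in H^{(q-1)(m+d)}(B\Sy_q,Z_2)$; the definition therefore gives $\pi_!\bigl(e(A_q)^{m+d}\bigr)=s_{q,d}(\eta-\xi)=\bar w_{d+1}^{\,q-1}\neq0$, and hence the image of $e(A_q)^{m+d}$ in $H_{\Sy_q}^{(q-1)(m+d)}(Q_M^q(TM),Z_2)$ is nonzero.

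Next I would transport this non-vanishing to $K^q(M)$: the $\Sy_q$-equivariant maps $Q_M^q(TM)\hookrightarrow K_M^q(TM)\xrightarrow{\exp}K^q(M)$ and the restriction $H_{\Sg_q}^*\to H_{\Sy_q}^*$ (under which $e(A_q)$ goes to $e(A_q)$) assemble into a chain of homomorphisms along which the class $e(A_q)^{m+d}$, coming from $B\Sg_q$, maps to the nonzero class just obtained; as this chain factors through $H_{\Sg_q}^{(q-1)(m+d)}(K^q(M),Z_2)$, the natural image of $e(A_q)^{m+d}$ there is nonzero. Lemma~\ref{gen-by-euler} with $n=m+d$ then gives $\cat K^q(M)/\Sg_q\ge g(K^q(M))\ge(q-1)(m+d)+1$. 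For the special case it suffices to verify the hypotheses on $M=\mathbb RP^{2^l-2-d}$ by the computation already in the proof of Theorem~\ref{proj-coinc}: $w(TM^\perp)=(1+u)^{-(m+1)}=(1+u)^{d+1}$ truly is concentrated in degrees $\le d+1$ with $\bar w_{d+1}=u^{d+1}$, and $\bar w_{d+1}^{q-1}=u^{(q-1)(d+1)}$ is nonzero in $H^*(\mathbb RP^{2^l-2-d},Z_2)$ exactly when $(q-1)(d+1)\le 2^l-2-d$, i.e.\ when $q(d+1)\le 2^l-1$, which is ensured by $q(d+1)<2^l-1$; the bound then reads $(q-1)(m+d)+1=(q-1)(2^l-2)+1\ge(2^l-3)(q-1)+1$.

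The one genuinely substantive point, which I would pin down carefully, is the identification built into Section~\ref{coind-char}: that $s_{q,d}(\epsilon^{m+d}-TM)$ equals $\pi_!$ of the Euler-class power $e(A_q)^{m+d}$ over $Q_M^q(TM)/\Sy_q$, and that this geometric definition agrees with the one (over $K^q(\mathbb R^\mu)\times M$) to which Theorem~\ref{2k-fold-leading} literally refers; this rests on the exact stability formula (\ref{coinc-stability-pow2}) and on Lemma~\ref{Qq}, which identifies the fibre class of $Q_0^q(\mathbb R^m)/\Sy_q$ with $e(A_q)^{m-1}$. Once that is granted, the implication ``$\pi_!(x)\neq0$, so $x\neq0$'' is immediate, and the remaining steps --- restriction to the Sylow subgroup $\Sy_q$, pullback along $\exp$, and the elementary arithmetic (notably $d+1\le\dim M$, so that $u^{(q-1)(d+1)}\neq0$ and $w(TM^\perp)$ genuinely stops at degree $d+1$) --- are routine.
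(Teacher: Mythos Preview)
Your proof is correct and follows essentially the same route as the paper: the paper phrases the key step as an application of Lemma~\ref{triv-reduction} (stabilizing by $TM^\perp$ to pass from the nonzero Euler class $e(A_q\otimes(TM^\perp\oplus\epsilon^{m+d}))$ on $K^q(\mathbb R^{m+k})\times M$ to $e(A_q)^{m+d}\neq 0$ on $K_M^q(TM)$), whereas you invoke the equivalent push-forward definition of $s_{q,d}$ over $Q_M^q(TM)$ directly --- these are two formulations of the same stability argument --- and both then finish with Lemma~\ref{gen-by-euler}. Your observation that the general bound actually yields $(2^l-2)(q-1)+1$ for $\mathbb RP^{2^l-2-d}$, one unit of $(q-1)$ stronger than the stated $(2^l-3)(q-1)+1$, is also correct.
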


\begin{proof}
Denote the dimension of normal bundle $k=\dim TM^\perp$. Theorem~\ref{2k-fold-leading} claims that the Euler class 
$$
e(A_q\otimes (TM^\perp\oplus\epsilon^{m+d}))\in H^{(q-1)(k + m + d)}(K^q(\mathbb R^{m+k})\times M)
$$ 
is nonzero under the assumptions.

Lemma~\ref{triv-reduction} now claims that the Euler class $e(A_q\otimes \epsilon^{m+d})=e(A_q)^{m+d}$ is also nonzero in $K_M^q(TM)$. Now the result follows from Lemma~\ref{gen-by-euler}.
\end{proof}

The results of Section~\ref{4-fold-coinc} give a similar corollary.

\begin{cor}
Denote $\bar w_i$ the Stiefel-whitney classes of the normal bundle $TM^\perp$. If the class
$$
\bar w_1^3+\bar w_1\bar w_2
$$
is nonzero on $M$ then
$$
g(K^4(M))\ge 3m + 1.
$$
If the class
$$
\bar w_2^3 + \bar w_3^2 + \bar w_1\bar w_2\bar w_3 + \bar w_2\bar w_4
$$
is nonzero on $M$ then
$$
g(K^4(M))\ge 3m + 4.
$$
\end{cor}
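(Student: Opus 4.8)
The plan is to mirror the proof of the preceding corollary, substituting the explicit values of $s_{4,0}$ and $s_{4,1}$ computed in Section~\ref{4-fold-coinc} for Theorem~\ref{2k-fold-leading}. First I would specialize the general set-up to $q=4$ — so the relevant Sylow subgroup is $\Sy_4$, the square group $D_8$ — and evaluate the leading class on $\xi=TM$ (of rank $m$) and $\eta=\epsilon^{m+d}$ with $d\in\{0,1\}$. Since
\[
w(\eta-\xi)=w(\epsilon^{m+d})\,w(TM)^{-1}=w(TM)^{-1}=w(TM^\perp),
\]
the Stiefel--Whitney classes of the virtual bundle $\eta-\xi$ coincide with the $\bar w_i$, so the formulas of Section~\ref{4-fold-coinc} become $s_{4,0}(\eta-\xi)=\bar w_1^3+\bar w_1\bar w_2$ and $s_{4,1}(\eta-\xi)=\bar w_2^3+\bar w_3^2+\bar w_1\bar w_2\bar w_3+\bar w_2\bar w_4$, i.e.\ exactly the two classes in the statement. (Note no map $f$ is needed: $s_{q,d}$ is a characteristic class of a virtual bundle.)

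Next I would translate a nonvanishing leading class into a nonvanishing power of $e(A_4)$. By Lemma~\ref{metric-Qq} the space $Q^4(M,\delta_1,\delta_2)$ is homeomorphic to the configuration space bundle $Q_M^4(TM)$, with centerpoint projection $\pi:Q_M^4(TM)/\Sy_4\to M$, and by the geometric definition of the leading class $s_{4,d}(\eta-\xi)=\pi_!\bigl(e(A_4\otimes\pi^*\eta)\bigr)=\pi_!\bigl(e(A_4)^{m+d}\bigr)$, using $A_4\otimes\epsilon^{m+d}=A_4^{\oplus(m+d)}$. If one of the two polynomials is nonzero in $H^*(M,Z_2)$, then its preimage $e(A_4)^{m+d}$ cannot vanish in $H_{\Sy_4}^{3(m+d)}(Q_M^4(TM),Z_2)=H^{3(m+d)}(Q_M^4(TM)/\Sy_4,Z_2)$. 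The restriction maps $H_{\Sg_4}^*(K_M^4(TM),Z_2)\to H_{\Sy_4}^*(K_M^4(TM),Z_2)\to H_{\Sy_4}^*(Q_M^4(TM),Z_2)$ carry $e(A_4)^{m+d}$ to $e(A_4)^{m+d}$ by naturality of the Euler class (and the first map is injective anyway, \cite{bro1982}), so the target class being nonzero forces $e(A_4)^{m+d}\not=0$ in $H_{\Sg_4}^{3(m+d)}(K_M^4(TM),Z_2)$.

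Finally I would apply Lemma~\ref{gen-by-euler} with $n=m+d$: nontriviality of $e(A_4)^{m+d}$ over $K_M^4(TM)$ yields $g(K^4(M))\ge 3(m+d)+1$, which is $3m+1$ for $d=0$ and $3m+4$ for $d=1$. The one step where I expect to spend real effort is the middle one — checking that the class $s_{4,d}$ extracted from the symmetric-function formula (\ref{euler-formula}) together with the relations $c^n=b^n=0$ in $H^*(Q^4(\mathbb R^n)/\Sy_4)$ really is the fibre integral $\pi_!(e(A_4)^{m+d})$; granting that identification — which is essentially the content of the geometric definition of $s_{q,d}$ in Section~\ref{coind-char} — the rest is the same routine bookkeeping as in the preceding corollary.
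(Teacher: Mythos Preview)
Your proposal is correct and follows essentially the approach the paper intends: the corollary is stated with no proof beyond ``similar corollary'', meaning one should re-run the preceding corollary's argument with the explicit formulas $s_{4,0}=\bar w_1^3+\bar w_1\bar w_2$ and $s_{4,1}=\bar w_2^3+\bar w_3^2+\bar w_1\bar w_2\bar w_3+\bar w_2\bar w_4$ replacing the input from Theorem~\ref{2k-fold-leading}. Your only deviation is cosmetic: the paper's template invokes Lemma~\ref{triv-reduction} (stabilize $TM$ to a trivial bundle and read off nonvanishing of the Euler class on $K^q(\mathbb R^{m+k})\times M$), whereas you work directly with the $\pi_!$ definition on $Q_M^4(TM)/\Sy_4$ and then restrict back to $K_M^4(TM)$; the two routes are declared equivalent in Section~\ref{coind-char}, so no extra work is needed for the step you flagged as potentially delicate.
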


And here is the corresponding corollary of Theorem~\ref{p-fold-coinc}.

\begin{cor}
Let $p$ be a prime. Consider the classes $\bar \alpha_{p,i}$ of the normal bundle $TM^\perp$, introduced in Section~\ref{p-fold}. If $\bar \alpha_{p, i}\not=0$ for some $i$, then 
$$
g(K^p(M))\ge (m + 2i - 1)(p-1) + 1.
$$
\end{cor}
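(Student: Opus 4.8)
The plan is to run the scheme of the corollary just proved from Theorem~\ref{2k-fold-leading}, substituting Theorem~\ref{p-fold-coinc} for Theorem~\ref{2k-fold-leading}. Fix a Whitney embedding $M\hookrightarrow\mathbb R^{m+k}$ with normal bundle $TM^\perp$ of dimension $k$, so that $TM\oplus TM^\perp=\epsilon^{m+k}$, and set $\mu=m+k$. By Lemma~\ref{gen-by-euler} it suffices to find the largest $n$ for which $e(A_p)^n\neq0$ in $H_{\Sg_p}^*(K_M^p(TM))$; the claim is that $n=m+2i-1$ works.

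First I would choose the target bundle: take $\xi=TM$ and $\eta=\epsilon^{m+2i-1}$, so that $\dim\eta-\dim\xi=2i-1$ and hence $2i\ge\dim\eta-\dim\xi+1$. Theorem~\ref{p-fold-coinc} then lists $\alpha_{p,i}(\eta-\xi)$ among the characteristic classes of coincident $p$-tuples of the fibrewise map $\xi\to\eta$. Since $\alpha_{p,i}$ is a polynomial in the Pontryagin classes and $\eta-\xi=\epsilon^{m+2i-1}-TM$ carries, mod $p$, the Pontryagin classes of $-TM$, i.e. those of the normal bundle, one reads off $\alpha_{p,i}(\eta-\xi)=\alpha_{p,i}(TM^\perp)=\bar\alpha_{p,i}$, which is nonzero on $M$ by hypothesis.

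Next I would make the nonvanishing explicit. In the notation of Section~\ref{p-fold}, with $\nu=\dim(TM^\perp\oplus\eta)=m+k+2i-1$, the splitting-principle formula writes $e(A_p\otimes(TM^\perp\oplus\eta))$ over $K^p(\mathbb R^\mu)\times M$ as a sum of terms $(-1)^j u^{\,\nu-2j}\bar\alpha_{p,j}$, $0\le j\le\lfloor\nu/2\rfloor$, with pairwise distinct powers of $u=e(A_p)$; for $j=i$ the exponent is $\nu-2i=m+k-1=\mu-1$. Since $u^{\mu-1}$ is the nonzero top power of $u$ in $H^*(K^p(\mathbb R^\mu)/\Sg_p)$ and $\bar\alpha_{p,i}\neq0$ on $M$, the K\"unneth formula makes the $j=i$ summand nonzero; as no two summands share a power of $u$, nothing cancels, so $e(A_p\otimes(TM^\perp\oplus\eta))\neq0$ over $K^p(\mathbb R^\mu)\times M$. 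Exactly as in the proof of the corollary above, Lemma~\ref{triv-reduction} together with the stability~(\ref{coinc-stability}) (with $\zeta=TM^\perp$) then de-stabilises this to $e(A_p\otimes\epsilon^{m+2i-1})=e(A_p)^{m+2i-1}\neq0$ in $H_{\Sg_p}^*(K_M^p(TM))$, and Lemma~\ref{gen-by-euler} with $n=m+2i-1$ finishes the argument.

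The step I expect to be the main obstacle is the de-stabilisation: passing from the nonvanishing over $K^p(\mathbb R^\mu)\times M$, where the splitting-principle computation naturally lives, to the nonvanishing over $K_M^p(TM)$ required by Lemma~\ref{gen-by-euler}. What makes it legitimate is that the surviving contribution is precisely the coefficient of the top power $u^{\mu-1}$, namely $\bar\alpha_{p,i}$, which is exactly the leading coefficient controlling the decomposition of $e(A_p)^n$ over the base $M$; this is the same mechanism used in the earlier corollary. Two lesser points that need care are the index bookkeeping showing $\nu-2i$ equals $\mu-1$ precisely (so that the class is detected at the very top of the fibre cohomology and nowhere lower), and the mod $p$ identification of the Pontryagin classes of $\epsilon^{m+2i-1}-TM$ with those of $TM^\perp$.
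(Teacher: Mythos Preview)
Your proposal is correct and follows essentially the same approach as the paper, which gives no separate proof for this corollary but simply presents it as the analogue of the preceding corollary with Theorem~\ref{p-fold-coinc} in place of Theorem~\ref{2k-fold-leading}. Your choice $\eta=\epsilon^{m+2i-1}$, the index check $\nu-2i=\mu-1$, and the appeal to Lemmas~\ref{triv-reduction} and~\ref{gen-by-euler} are exactly the intended template; the extra K\"unneth argument isolating the $u^{\mu-1}$ coefficient is a welcome explicit verification of what the paper leaves implicit.
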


\end{document}